\newtheorem{theorem}{Theorem}
\newtheorem{lemma}[theorem]{Lemma}
\newtheorem{proposition}[theorem]{Proposition}
\newtheorem{corollary}[theorem]{Corollary}
\theoremstyle{definition}
\newtheorem{definition}[theorem]{Definition}
\begin{document}

\title[Proof of Becker's conjecture]{Becker's conjecture on Mahler functions}

\subjclass[2010]{Primary 11B85, 30B10; Secondary 68R15}
\keywords{Automatic sequences, regular sequences, Mahler functions}

\author{Jason P. Bell}
\address{Department of Pure Mathematics, University of Waterloo, Canada}
\email{jpbell@uwaterloo.ca}

\author{Fr\'ed\'eric Chyzak}
\address{INRIA, Universit\'e Paris--Saclay, France}
\email{Frederic.Chyzak,Philippe.Dumas@inria.fr}

\author{Michael Coons}
\address{School of Mathematical and Physical Sciences\\
University of Newcastle\\
Australia}
\email{Michael.Coons@newcastle.edu.au}

\author{Philippe Dumas}

\date{\today}
\thanks{The research of J.~P.~Bell was partially supported by an NSERC Discovery Grant. M.~Coons was visiting the Alfr\'ed R\'enyi Institute of the Hungarian Academy of Sciences during the time this research was undertaken; he thanks the Institute and its members for their kindness and support.}

%%%%%%%%%%%%%%%%%%%%%%%%%%%%%%%%%%%%%%%%%%%%%%%%
\begin{abstract} In 1994, Becker conjectured that if $F(z)$ is a $k$-regular power series, then there exists a $k$-regular rational function $R(z)$ such that $F(z)/R(z)$ satisfies a Mahler-type functional equation with polynomial coefficients where the initial coefficient satisfies $a_0(z)=1$. In this paper, we prove Becker's conjecture in the best-possible form; we show that the rational function $R(z)$ can be taken to be a polynomial $z^\gamma Q(z)$ for some explicit non-negative integer $\gamma$ and such that $1/Q(z)$ is $k$-regular.
\end{abstract}
%%%%%%%%%%%%%%%%%%%%%%%%%%%%%%%%%%%%%%%%%%%%%%%%

\maketitle

%%%%%%%%%%%%%%%%%%%%%%%%%%%%%
\section{Introduction}
%%%%%%%%%%%%%%%%%%%%%%%%%%%%%

Let $k\geqslant 2$ be an integer. A Laurent power series $F(z)\in\mathbb{C}((z))$ is called {\em $k$-Mahler} provided there exist a positive integer $d$ and polynomials $a_0(z),\ldots,a_d(z)\in\mathbb{C}[z]$ with $a_0(z)a_d(z)\neq 0$ such that $F(z)$ satisfies the Mahler-type functional equation \begin{equation}\label{MFE} a_0(z)F(z)+a_1(z)F(z^k)+\cdots+a_d(z)F(z^{k^d})=0.\end{equation} The minimal $d$ for which such an equation exists is called the {\em degree} of $F(z)$.

There has been a flurry of recent activity involving the study of Mahler series---see, e.g. \cite{AF2017,BC2017,BCZ2016,BV2016,C2016,GT2018,P2015,R2017,R2018}---in large part due to the fact that one can often deduce transcendence of special values of Mahler series by knowing transcendence of the series itself, and also due to the guiding principle that much of the theory of Mahler series should mirror the much better developed theory of solutions to homogeneous differential equations.

A special subclass of Mahler functions is the ring of $k$-regular power series. These functions are defined from their coefficient sequences. More specifically, a power series $F(z)=\sum_{n\geqslant 0} f(n)z^n$ is {\em $k$-regular} provided there is a positive integer $D$, vectors ${\boldsymbol\ell},{\bf c}\in\mathbb{C}^{D\times 1}$, and matrices ${\bf A}_0,\ldots,{\bf A}_{k-1}\in\mathbb{C}^{D\times D}$, such that for all $n\geqslant 0$, $$f(n)={\boldsymbol\ell}^T {\bf A}_{i_s}\cdots {\bf A}_{i_0}{\bf c},$$ where $(n)_k=i_s\cdots i_0$ is the base-$k$ expansion of $n$. Allouche and Shallit \cite{AS1992} introduced $k$-regular sequences in the early nineties as a generalisation of $k$-automatic sequences; that is, sequences obtained from a deterministic finite-state automaton which takes as input the base-$k$ expansion of $n$ and outputs the $n$-th term of the sequence. By refining the proof of the known result that $k$-automatic power series are $k$-Mahler, Allouche \cite[Theorem 1]{A1987} implicitly showed\footnote{The proof of the latter implication is indeed buried in his proof of the former, as a substep that does not rely on the finiteness of the base field \cite[p. 253 and~254]{B1994}.} that a $k$-regular power series is $k$-Mahler. Since all $k$-automatic sequences are $k$-regular sequences, there is a natural hierarchy: $$\{\mbox{$k$-automatic functions}\} \subset \{\mbox{$k$-regular functions}\}\subset \{\mbox{$k$-Mahler functions}\}.$$ Additionally, both inclusions are proper. For example, consider the three paradigmatic examples for $k=2$ of degree one, $$T(z)=\prod_{j\geqslant 0}(1-z^{2^j}),\quad S(z)=\prod_{j\geqslant 0}(1+z^{2^j}+z^{2^{j+1}})\quad\mbox{and}\quad T(z)^{-1}=\prod_{j\geqslant 0}(1-z^{2^j})^{-1}.$$ The function $T(z)$ is the generating power series of the Thue--Morse sequence $t(n)$ over the alphabet $\{-1,1\}$, which is $2$-automatic. The function $S(z)$ is the generating power series of the Stern sequence $s(n+1)$ that counts the number of hyperbinary representations of the number $n+1$, which is $2$-regular but not $2$-automatic. And finally, the function $T(z)^{-1}$ is $2$-Mahler but not $2$-regular; the coefficients $p(n)$ of the power series expansion of $T(z)^{-1}$ count the number of ways of writing the number $n$ as sums of powers of two.

Since a $k$-regular power series is $k$-Mahler, an immediate question arises: {\em can one determine if a solution to \eqref{MFE} is $k$-regular, or not, based solely on properties of the functional equation?} Towards answering this question, Becker \cite[Theorem 2]{B1994} showed that if $a_0(z)=1$, then $F(z)$ is $k$-regular. He conjectured \cite[p.~279]{B1994} that a sort of converse to this result also holds. Specifically, Becker conjectured that {\em if $F(z)$ is a $k$-regular power series, then there exists a nonzero $k$-regular rational function $R(z)$ such that $F(z)/R(z)$ satisfies a Mahler-type functional equation \eqref{MFE} with $a_0(z)=1$.} In view of this conjecture, a power series $F(z)$ is called {\em $k$-Becker} provided it satisfies a functional equation \eqref{MFE} with $a_0(z)=1$.

The historical significance of the $k$-Becker property lies in the fact that zeros of $a_0(z)$ in the minimal Mahler equation \eqref{MFE} for $F(z)$ are values $\alpha$ at which the theorems proving transcendence of $F(\alpha)$ based upon knowledge of algebraic independence of certain related Mahler functions do not apply; this point is highlighted in the works of Loxton and van der Poorten \cite{LV1982,LV1988} and the celebrated result of Nishioka \cite{N1990,N1996}.  In this paper, we prove (a bit more than) Becker's conjecture.

\begin{theorem}\label{main} If $F(z)$ is a $k$-regular power series, there exist a nonzero polynomial $Q(z)$ with $Q(0)=1$ such that $1/Q(z)$ is $k$-regular and a nonnegative integer $\gamma$ such that $F(z)/z^\gamma Q(z)$ satisfies a Mahler-type functional equation \eqref{MFE} with $a_0(z)=1$.
\end{theorem}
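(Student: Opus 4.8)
The strategy is to pass from the definition of $k$-regularity to a first-order Mahler \emph{system}, and then to run an elimination in which one controls exactly where denominators are created; write $\sigma$ for the substitution $z\mapsto z^{k}$. Note first that every $k$-Becker series $H$ has $H(0)\neq 0$: in $H(z)=\sum_{j\geq 1}a_{j}(z)H(z^{k^{j}})$ the right-hand side has order at $0$ at least $k\cdot\operatorname{ord}_{0}H$, forcing $\operatorname{ord}_{0}H=0$ unless $H=0$. Hence $\gamma$ is constrained from below by $\operatorname{ord}_{0}F$, and after replacing $F$ by $F/z^{\operatorname{ord}_{0}F}$ (still $k$-regular, as $k$-regular sequences are closed under shift) we may assume $F(0)\neq 0$; it then suffices to produce $Q$ with $Q(0)=1$, $1/Q$ $k$-regular, and a relation for $F$ that, after dividing by a suitable explicit power $z^{\gamma}$ and by $Q$, has constant term $1$. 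Let $W$ be the finite-dimensional $\mathbb{C}$-space spanned by the $k$-kernel of $F$, pick a basis $\mathbf{G}=(G_{1},\dots,G_{D})^{T}$ with $G_{1}=F$, and translate the section identities $G_{i}(z)=\sum_{0\le r<k}z^{r}(\Lambda_{r}G_{i})(z^{k})$ into
\[
\mathbf{G}(z)=\mathbf{B}(z)\,\mathbf{G}(z^{k}),\qquad \mathbf{B}(z)\in\mathbb{C}[z]^{D\times D},\quad \deg\mathbf{B}<k .
\]
Since $\mathbf{G}(0)=\mathbf{B}(0)\mathbf{G}(0)\neq 0$, the matrix $\mathbf{B}(0)$ has $1$ as an eigenvalue, and the first row already reads $F(z)=\sum_{j}\mathbf{B}(z)_{1j}\,G_{j}(z^{k})$ with coefficient $1$ on each $G_{j}(z^{k})$.

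The heart of the matter — and what I expect to be the main obstacle — is to prove that $\Delta(z):=\det\mathbf{B}(z)$, for a suitably chosen (e.g.\ minimal) representation, has all of its zeros at $0$ or at roots of unity, with multiplicities bounded explicitly in $D$ and $k$. The idea is to exploit that the $G_{i}$ are genuine power series with prescribed value at $0$. Iterating the system gives $\mathbf{G}(z)=\mathbf{B}(z)\mathbf{B}(z^{k})\cdots\mathbf{B}(z^{k^{N-1}})\,\mathbf{G}(z^{k^{N}})$, and since $\mathbf{G}(z^{k^{N}})\to\mathbf{G}(0)$ inside the unit disk, a zero $\alpha$ of $\Delta$ with $0<|\alpha|<1$ would force $\mathbf{B}(\alpha)$ to annihilate a direction invisible to every $\mathbf{G}(\alpha^{k^{m}})$, contradicting that the $k$-kernel spans $W$; a zero on the unit circle that is not a root of unity is excluded because its forward orbit under $z\mapsto z^{k}$ is infinite whereas $\Delta$ has finitely many zeros; and zeros off the closed disk are ruled out — or shown removable in the elimination below — by a growth estimate combined with Becker's criterion quoted in the introduction, which pins down the exponents at $0$ of the minimal equation of a regular function.

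Granting this, I would finish by elimination. For $i=0,1,\dots,D$ one has $F(z^{k^{i}})=\mathbf{v}_{i}(z)^{T}\mathbf{G}(z^{k^{D}})$ with $\mathbf{v}_{i}(z)^{T}=e_{1}^{T}\mathbf{B}(z^{k^{i}})\cdots\mathbf{B}(z^{k^{D-1}})$ (empty product $=I$, so $\mathbf{v}_{D}^{T}=e_{1}^{T}$), where $e_{1}$ is the first standard basis vector. Right-multiplying every $\mathbf{v}_{i}$ by the inverse of the invertible matrix $\mathbf{B}(z^{k})\cdots\mathbf{B}(z^{k^{D-1}})$ carries $\mathbf{v}_{0}$ to the \emph{polynomial} first row of $\mathbf{B}(z)$, while carrying $\mathbf{v}_{1},\dots,\mathbf{v}_{D}$ to covectors whose denominators divide dilates of $\Delta$; writing the image of $\mathbf{v}_{0}$ as a $\mathbb{C}(z)$-combination of the others (adjusting the representation if these fail to span) yields
\[
F(z)=\sum_{i\geq 1}b_{i}(z)\,F(z^{k^{i}}),\qquad b_{i}\in\mathbb{C}(z),
\]
with coefficient exactly $1$ on $F(z)$ and, by the key lemma, with the poles of each $b_{i}$ confined to $0$ and to roots of unity, of bounded order. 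Now choose a polynomial $Q(z)$ with $Q(0)=1$ all of whose roots are roots of unity, of multiplicity high enough that $Q(z^{k^{i}})/Q(z)$ cancels the roots-of-unity poles of $b_{i}$ for all $i$; then $1/Q$ is $k$-regular, its coefficient sequence being a finite combination of products of a polynomial and a periodic sequence. Absorbing likewise the pole of $b_{i}$ at $0$ into a factor $z^{\gamma(k^{i}-1)}$ fixes the explicit value of $\gamma$ (compatibility with $\operatorname{ord}_{0}F$ being exactly the regularity input needed), so that $a_{i}(z):=b_{i}(z)\,z^{\gamma(k^{i}-1)}Q(z^{k^{i}})/Q(z)\in\mathbb{C}[z]$, and dividing the displayed relation through gives $\bigl(F/z^{\gamma}Q\bigr)(z)=\sum_{i\ge1}a_{i}(z)\,\bigl(F/z^{\gamma}Q\bigr)(z^{k^{i}})$; that is, $F/z^{\gamma}Q$ is $k$-Becker.
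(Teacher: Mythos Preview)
Your proposal has a genuine gap at what you yourself flag as the main obstacle: the claim that $\Delta(z)=\det\mathbf{B}(z)$ has all zeros at $0$ or at roots of unity. None of the three sketched cases goes through. For $|\alpha|=1$ with $\alpha$ not a root of unity, you argue that the forward orbit $\{\alpha^{k^m}\}$ is infinite while $\Delta$ has finitely many zeros---but nothing forces that orbit to consist of zeros of $\Delta$; singularity of $\mathbf{B}(\alpha)$ imposes no constraint whatsoever on $\mathbf{B}(\alpha^{k})$. For $0<|\alpha|<1$, singularity of $\mathbf{B}(\alpha)$ only says $\mathbf{G}(\alpha)$ lies in the range of $\mathbf{B}(\alpha)$, a proper subspace; ``the $k$-kernel spans $W$'' is a statement about functions, not about the pointwise vectors $\mathbf{G}(\alpha^{k^m})$, and no contradiction follows. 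The case $|\alpha|>1$ is deferred to an unspecified growth estimate. Even granting the lemma, your elimination step introduces a further denominator---the determinant of the matrix with rows $v_1M^{-1},\dots,v_DM^{-1}$, via Cramer's rule---which is not a product of dilates of $\Delta$, so the pole locations of the $b_i$ are not controlled as claimed; and your ``choose $Q$'' step, arranging that each $Q(z^{k^i})/Q(z)$ is a polynomial with zeros exactly where needed, is itself a delicate construction (this is Lemma~\ref{lem:norou} in the paper), not the one-liner you make it.

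The paper's route sidesteps your key lemma entirely. It starts from the \emph{minimal} Mahler equation for $F$ and first builds $Q$ explicitly (Lemma~\ref{lem:norou}) to strip from $a_0$ only those root-of-unity zeros $\zeta$ with $\zeta^{k^M}\neq\zeta$ for all $M\ge1$. It then studies the monic generator $q(z)$ of the ideal $\{p\in\mathbb{C}[z]:pG\in\sum_{j\ge1}\mathbb{C}[z]G(z^{k^j})\}$ for $G=F/z^{\gamma}Q$. Any surviving root-of-unity zero $\xi$ of $q$ would satisfy $\xi^{k^M}=\xi$ for some $M$, and Proposition~\ref{prop0} shows such a zero forces the pole orders at $\xi$ in the iterated companion-matrix products $\mathbf{B}_n(z)$ to grow without bound, contradicting the uniform bound that $k$-regularity provides (Lemma~\ref{Buniform}). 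A non-root-of-unity zero $\lambda$ of $q$ is ruled out by choosing $N$ so that $\lambda^{k^N}$ avoids the finitely many poles permitted by the finite-dimensional span $V$, then applying Kisielewski's Lemma~\ref{Kislemma8} $N$ times to push a pole at $\lambda$ through Cartier operators into $V$---a contradiction. Hence $q=1$, which is exactly the desired $a_0=1$. The point is that the paper localises the pole-order bookkeeping to the single scalar generator $q$ rather than to the full determinant $\Delta$, and that is what makes the argument close.
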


\noindent Moreover, if the Mahler-type functional equation of minimal degree for $F(z)$ is known, then the polynomial $Q(z)$ in Theorem \ref{main} can be easily written down. Specifically, if \eqref{MFE} is the minimal functional equation for $F(z)$, and we write $A$ for the set of roots of unity $\zeta$ such that $\zeta^{k^M}\neq\zeta$ for all $M\geqslant 1$ and $a_0(\zeta)=0$, then there is an $N$ depending on $a_0(z)$ such that $$Q(z):=\prod_{\zeta\in A}\prod_{j=0}^{N-1}(1-z^{k^j}\overline{\zeta}^{k^N})^{\nu_\zeta(a_0(z))},$$ where for a given Laurent power series $g(z)$, $\nu_\zeta(g(z))$ is the order of the zero of $g(z)$ at $z=\zeta$. For more details, see the proof of Lemma \ref{lem:norou}. Noting that all of the zeros of the polynomial $Q(z)$ are roots of unity of order not coprime to $k$, we may combine this with a result of Dumas \cite[Th\'eor\`eme 30]{Dumasthese} to give the following proposition.

\begin{proposition}\label{prop:dumas} Let $F(z)\in\mathbb{C}[[z]]$. Then $F(z)$ is $k$-regular if and only if $F(z)$ satisfies some functional equation \eqref{MFE} such that all of the zeros of $a_0(z)$ are either zero or roots of unity of order not coprime to $k$.
\end{proposition}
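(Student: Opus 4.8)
The plan is to prove the two implications separately; neither requires much beyond results already in hand. For the reverse implication, suppose $F(z)$ satisfies some equation \eqref{MFE} in which every zero of $a_0(z)$ is $0$ or a root of unity of order not coprime to $k$. A fortiori every nonzero zero of $a_0(z)$ is a root of unity, so $F(z)$ is $k$-regular by the result of Dumas \cite[Th\'eor\`eme 30]{Dumasthese}. The one thing to settle here is a dictionary between formulations: for a root of unity $\zeta$ of order $m$, one has $\zeta^{k^M}=\zeta$ for some $M\geqslant 1$ precisely when $k^M\equiv 1\pmod m$ has a solution, which happens exactly when $\gcd(m,k)=1$; so ``$\zeta$ has order not coprime to $k$'' is the same as ``$\zeta^{k^M}\neq\zeta$ for all $M\geqslant 1$'', and the hypothesis above is then in whatever form Dumas's theorem requires.

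For the forward implication, I would feed a $k$-regular $F(z)$ into Theorem \ref{main}, obtaining a nonnegative integer $\gamma$ and a polynomial $Q(z)$ with $Q(0)=1$ such that $G(z):=F(z)/\bigl(z^\gamma Q(z)\bigr)$ satisfies $\sum_{i=0}^{d}b_i(z)\,G(z^{k^i})=0$ for some polynomials $b_i(z)$ with $b_0(z)=1$ and $b_0b_d\neq 0$. Substituting $G(z^{k^i})=F(z^{k^i})/\bigl(z^{\gamma k^i}Q(z^{k^i})\bigr)$ and multiplying through by $z^{\gamma k^d}\prod_{j=0}^{d}Q(z^{k^j})$ turns this into an instance $\sum_{i=0}^{d}a_i(z)\,F(z^{k^i})=0$ of \eqref{MFE} with
\[
a_i(z)=b_i(z)\,z^{\gamma(k^d-k^i)}\prod_{0\leqslant j\leqslant d,\ j\neq i}Q(z^{k^j}),
\]
and one checks routinely that each $a_i(z)$ is a polynomial (the exponents $\gamma(k^d-k^i)$ being nonnegative) and that $a_0a_d\neq 0$, so this is a bona fide instance of \eqref{MFE}. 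Since $b_0(z)=1$, the leading polynomial is $a_0(z)=z^{\gamma(k^d-1)}\prod_{j=1}^{d}Q(z^{k^j})$, whose zeros are $z=0$ together with, for each $1\leqslant j\leqslant d$, the $k^j$-th roots of the zeros of $Q(z)$. By the explicit form of $Q(z)$ recorded after the statement of Theorem \ref{main} (and justified in the proof of Lemma \ref{lem:norou}), every zero of $Q(z)$ is a root of unity of order not coprime to $k$; and if $\omega^{k^j}=\zeta$ then the order of $\zeta$ divides the order of $\omega$, so a $k^j$-th root of a root of unity of order not coprime to $k$ again has order not coprime to $k$. Hence every zero of $a_0(z)$ is $0$ or a root of unity of order not coprime to $k$, as required.

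I do not expect a real obstacle: the mathematical content is imported in its entirety, from Theorem \ref{main} on one side and from Dumas's \cite[Th\'eor\`eme 30]{Dumasthese} on the other, and what is left is the bookkeeping of clearing denominators together with the elementary fact that being a root of unity of order not coprime to $k$ is preserved under extraction of $k^j$-th roots --- the latter being the sole place where $k\geqslant 2$ (i.e.\ that $k$ has a prime divisor) enters. The only steps deserving a little care are matching the hypothesis of Dumas's theorem exactly --- hence the terminological remark --- and verifying that the equation obtained after clearing denominators is still a legitimate instance of \eqref{MFE}, with $a_0$ and $a_d$ both nonzero; both are routine.
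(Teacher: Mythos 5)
Your overall route coincides with the paper's: the reverse implication is delegated to Dumas (the paper unpacks this as the Structure Theorem \ref{lem:DAB} together with Proposition \ref{regprod} and the ring property of $k$-regular series, but the content is the same), and the forward implication clears denominators in the $a_0(z)=1$ equation provided by Theorem \ref{main}; your observation that the order of $\zeta$ divides the order of any $k^j$-th root of $\zeta$ is a clean substitute for the paper's Lemma \ref{Qzk}. The forward direction nevertheless contains a genuine gap, at precisely the step you assert rather than prove: that every zero of $Q(z)$ is a root of unity of order not coprime to $k$. The proof of Lemma \ref{lem:norou} does not establish this (it only establishes the functional-equation properties of $Q$ and the $k$-regularity of $1/Q$), and the assertion is in fact false whenever $A\neq\emptyset$: the $j=0$ factor $1-z\overline{\zeta}^{k^N}$ of $Q(z)$ vanishes at $z=\zeta^{k^N}$, and since $\zeta^{k^{2N}}=\zeta^{k^N}$ this point satisfies $\xi^{k^N}=\xi$, so its multiplicative order divides $k^N-1$ and is therefore coprime to $k$. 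Concretely, take $k=2$ and $F(z)=(1-z)\prod_{j\geqslant 0}(1+z^{2^j}+z^{2^{j+1}})$, a $2$-regular series with minimal equation $(1+z)F(z)=(1+z+z^2)F(z^2)$; then $A=\{-1\}$, $N=1$, $Q(z)=1-z$, and the cleared equation has $a_0(z)=Q(z^2)=1-z^2$, which vanishes at $z=1$, a root of unity of order $1$, coprime to $2$. So the equation you construct need not satisfy the conclusion, even though the proposition does hold for this $F$ (its minimal equation already witnesses it).

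You have faithfully reproduced the paper's own argument here, including this unproved assertion (it appears as a parenthetical ``Noting that\dots'' in the introduction), so the gap is inherited rather than introduced; but it must be closed. A repair using only tools already in the paper: let $q_F(z)$ be the monic generator of the ideal of polynomials $p(z)$ with $p(z)F(z)\in\sum_{j\geqslant 1}\mathbb{C}[z]F(z^{k^j})$. Corollary \ref{zetaN=zeta} shows that every nonzero zero $\xi$ of $q_F$ satisfies $\xi^{k^M}\neq\xi$ for all $M\geqslant 1$, so any such zero that is a root of unity has order not coprime to $k$; and the argument involving the non-root-of-unity zero $\lambda$ in the proof of Theorem \ref{main} applies verbatim with $F$ in place of $G$ and rules out zeros of $q_F$ that are not roots of unity. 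The resulting relation $q_F(z)F(z)=\sum_{j\geqslant 1}c_j(z)F(z^{k^j})$ is then an instance of \eqref{MFE} with the required property on $a_0=q_F$. Finally, a small caution on the reverse direction: ``every nonzero zero of $a_0$ is a root of unity'' is not by itself enough to invoke Dumas---$T(z)^{-1}$ satisfies $(1-z)T(z)^{-1}-T(z^2)^{-1}=0$ yet is not $2$-regular---so the coprimality condition, which you do go on to translate correctly, must genuinely be carried into the citation.
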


\noindent Note that the functional equation alluded to in the above proposition need not be minimal.

To prove Theorem \ref{main}, we will show that if $F(z)$ is $k$-regular satisfying \eqref{MFE}, then, essentially, one can `remove' all of the zeros of $a_0(z)$ that are roots of unity. We then show that, after dividing by an appropriate power of $z$, the resulting function satisfies another Mahler-type functional equation with $a_0(z)=1$, but is not necessarily $k$-Becker, since {\em it may not be a power series}.

This line of reasoning is inspired by a recent paper of Kisielewski \cite{K2017}, who considered Becker's conjecture for a subclass of regular functions. Indeed, Kisielewski \cite[Proposition~2]{K2017} showed that Becker's conjecture holds for every $k$-regular function $F(z)$ satisfying a functional equation \eqref{MFE} of minimal degree $d$ such that $a_0(z)$ has no zeros that are roots of unity; specifically, for a function $F(z)$ in this class, he showed there exists a $k$-regular rational function $R(z)$ such that $F(z)/R(z)$ is $k$-Becker; his result is purely existential concerning the rational function $R(z)$. In comparison, Theorem \ref{main} has the following corollary in this context.

\begin{corollary} Suppose that $F(z)$ is a $k$-regular function satisfying a functional equation \eqref{MFE} of minimal degree $d$ such that $a_0(0)\neq 0$ and $a_0(z)$ has no zeros that are roots of unity. Then $F(z)$ is $k$-Becker.
\end{corollary}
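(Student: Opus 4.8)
The plan is to obtain the corollary directly from Theorem~\ref{main}, using the explicit descriptions of $Q(z)$ and of $\gamma$ recorded after its statement: I will check that the hypotheses force $Q(z)=1$ and $\gamma=0$, so that the conclusion of Theorem~\ref{main} applies to $F(z)$ itself. Since the equation \eqref{MFE} is assumed to be of minimal degree, the recipe for $Q(z)$ is available, with $a_0(z)$ its leading coefficient.

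First I would show $Q(z)=1$. By the displayed formula, $Q(z)$ is a product indexed by the set $A$ of roots of unity $\zeta$ satisfying $\zeta^{k^M}\neq\zeta$ for all $M\geqslant 1$ and $a_0(\zeta)=0$. Since $a_0(z)$ has no zeros that are roots of unity, $A$ is empty, and $Q(z)$ is the empty product $1$ (so also $Q(0)=1$ and $1/Q(z)=1$ is $k$-regular, as it should be).

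Next I would show $\gamma=0$, which is where the hypothesis $a_0(0)\neq 0$ is used and which I expect to be the only delicate point. Following the construction of $\gamma$ through the proof of Lemma~\ref{lem:norou}, $\gamma$ is introduced only to divide out the power of $z$ dividing the leading coefficient of the functional equation produced once the root-of-unity zeros of $a_0(z)$ have been removed; with $Q(z)=1$ nothing is removed, that leading coefficient is again $a_0(z)$, and as $a_0(0)\neq 0$ no positive power of $z$ divides it, so $\gamma=0$. That this hypothesis cannot be dropped is visible from $F(z)=z$: its minimal equation $z^{k-1}F(z)-F(z^k)=0$ has $a_0(0)=0$, one is forced to take $\gamma=1$, and $F(z)=z$ is not $k$-Becker.

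Putting the two steps together, Theorem~\ref{main} applied to $F(z)$ yields that $F(z)/(z^{\gamma}Q(z))=F(z)$ satisfies a functional equation \eqref{MFE} with $a_0(z)=1$, that is, $F(z)$ is $k$-Becker, which is the assertion. The main obstacle is the bookkeeping of the second step --- extracting from the proof of Lemma~\ref{lem:norou} that the $\gamma$ it produces is $0$ exactly when $Q(z)=1$ and $a_0(0)\neq 0$; granting that, the corollary is a one-line substitution into Theorem~\ref{main}.
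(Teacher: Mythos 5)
Your proposal is correct and is essentially the argument the paper intends (the paper states the corollary without proof, as an immediate consequence of Theorem~\ref{main} together with the explicit recipe for $Q(z)$ and the factorization $a_0(z)=cz^\gamma\Gamma(z)$ in the proof of Lemma~\ref{lem:norou}): the hypothesis on roots of unity makes $A=\emptyset$, hence $Q(z)=1$, and $a_0(0)\neq 0$ forces $\gamma=\nu_0(a_0)=0$, so $G(z)=F(z)$ is a power series satisfying an equation \eqref{MFE} with $a_0(z)=1$. Your side remark that $F(z)=z$ shows the hypothesis $a_0(0)\neq 0$ cannot be dropped is also accurate.
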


This paper is outlined as follows. Section \ref{prelims} contains preliminary results that will be needed in Section \ref{proof}, which contains the proof of Theorem \ref{main}. Section \ref{further} contains justification that Theorem \ref{main} is the best-possible resolution of Becker's conjecture; in particular, in that section, we give an example of a $k$-regular function $F(z)$ such that for any rational function $R(z)$, the function $R(z)F(z)$ cannot simultaneously be a power series and satisfy the conclusion of Becker's conjecture. Finally, in Section~\ref{structure} we prove Proposition \ref{prop:dumas}.

%%%%%%%%%%%%%%%%%%%%%%%%%%%%%
\section{Preliminaries}\label{prelims}
%%%%%%%%%%%%%%%%%%%%%%%%%%%%%

We require the following definition and lemmas.
% 5(a) = Lemma 4, p. 20 in Dumas's thesis (1993)
A general form of Lemma \ref{ch6:Cartier}(a) is given by Dumas \cite[Lemma 4, p.~20]{Dumasthese}
% 5(b) = first line of p. 255 in Allouche (1987)
and Lemma~\ref{ch6:Cartier} was known at least to Allouche \cite[p.~255]{A1987}.
For an English reference, we refer the reader to the work of Becker \cite[Lemma~2]{B1994}.
% 6 = ((a) <=> (c)) of Theorem 2.2 in Allouche and Shallit (1992)
Lemma~\ref{cartvec} is a restatement of Allouche and Shallit \cite[Theorem 2.2]{AS1992}, also provided by Becker \cite[Lemma~3]{B1994}.
For its part, Lemma~\ref{Kislemma8} is due to Kisielewski \cite[Lemma~8]{K2017}.

\begin{definition}\label{cartier} Let $C(z)=\sum_{n\geqslant 0}c(n)z^n$. Given a positive integer $k\geqslant 2$, for each $i\in\{0,\ldots,k-1\}$, we define the {\em Cartier operator} $\Lambda_i:\mathbb{C}[[z]]\to \mathbb{C}[[z]]$ by $$\Lambda_i(C)(z)=\sum_{n\geqslant 0}c(kn+i)z^n.$$
\end{definition}

%Becker \cite{B1994}
\begin{lemma}[Dumas \cite{Dumasthese}, Allouche \cite{A1987}]\label{ch6:Cartier} Let $F(z),G(z)\in\mathbb{C}[[z]]$. For $i=0,\ldots,k-1$ we have
\begin{enumerate}
\item[(a)] $\Lambda_i(F(z^k)G(z))=F(z)\Lambda_i(G(z))$, and
\item[(b)] $F(z)=\sum_{i=0}^{k-1} z^i\Lambda_i(F)(z^k).$
\end{enumerate}
\end{lemma}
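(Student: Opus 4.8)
The plan is to establish both identities by direct computation on coefficients, exploiting the uniqueness of a power series with a given coefficient sequence. First I would prove part (b), since it is essentially a bookkeeping statement. Write $F(z)=\sum_{n\geqslant 0}f(n)z^n$ and, for each residue $i\in\{0,\ldots,k-1\}$, note that $z^i\Lambda_i(F)(z^k)=\sum_{m\geqslant 0}f(km+i)z^{km+i}$. Every nonnegative integer $n$ can be written uniquely as $n=km+i$ with $0\leqslant i\leqslant k-1$ and $m\geqslant 0$, so summing the $k$ series over $i$ recovers exactly $\sum_{n\geqslant 0}f(n)z^n=F(z)$. No convergence issues arise because we are working in the ring of formal power series and each coefficient of $z^n$ on the right comes from exactly one of the $k$ summands.

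For part (a), I would again compare coefficients. Write $F(z)=\sum_{m\geqslant 0}f(m)z^m$ and $G(z)=\sum_{n\geqslant 0}g(n)z^n$, so that $F(z^k)G(z)=\sum_{m,n\geqslant 0}f(m)g(n)z^{km+n}$; hence the coefficient of $z^N$ in $F(z^k)G(z)$ is $\sum_{km+n=N}f(m)g(n)$. Applying $\Lambda_i$ and extracting the coefficient of $z^M$ means setting $N=kM+i$, giving $\sum_{km+n=kM+i}f(m)g(n)$. In such a decomposition, reducing modulo $k$ forces $n\equiv i\pmod k$, so write $n=kn'+i$; then $km+kn'+i=kM+i$ gives $m+n'=M$. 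Thus the coefficient of $z^M$ in $\Lambda_i(F(z^k)G(z))$ equals $\sum_{m+n'=M}f(m)g(kn'+i)$, which is precisely the coefficient of $z^M$ in $F(z)\Lambda_i(G)(z)$. Since two formal power series with the same coefficients are equal, the identity follows.

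The only mild subtlety — and the closest thing to an obstacle — is handling the index reduction in part (a) cleanly: one must observe that in any solution of $km+n=kM+i$ with $0\leqslant i\leqslant k-1$, the residue of $n$ modulo $k$ is automatically $i$, which is what lets us substitute $n=kn'+i$ and collapse the double sum to the Cauchy product defining $F(z)\Lambda_i(G)(z)$. Everything else is routine formal-power-series manipulation, and no analytic input is needed. I would present part~(b) first and then use the same coefficient-extraction technique for part~(a).
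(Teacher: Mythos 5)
Your proof is correct and is the standard coefficient-comparison argument; the paper does not prove this lemma itself but cites Dumas, Allouche, and Becker, whose proofs proceed in exactly this way. Both parts are handled cleanly, including the key observation in (a) that $km+n=kM+i$ with $0\leqslant i\leqslant k-1$ forces $n\equiv i\pmod k$, so nothing is missing.
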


In Lemma \ref{ch6:Cartier}, $\Lambda_i(F)(z^k)$ is understood as $\Lambda_i(F(z))$ evaluated at $z^k$, so that if we write $F(z)=\sum_{n\geqslant 0} f(n)z^n$, then $\Lambda_i(F)(z^k)=\sum_{n\geqslant 0} f(kn+i)z^{kn}.$

%Becker \cite{B1994}
\begin{lemma}[Allouche and Shallit \cite{AS1992}]\label{cartvec}  The function $F(z)\in\mathbb{C}[[z]]$ is $k$-regular if and only if the $\mathbb{C}$-vector space $$V:=\left\langle\left\{\Lambda_{r_n}\cdots \Lambda_{r_1}(F)(z): 0\leqslant r_i<k,\ n\in\mathbb{N}\right\}\right\rangle_\mathbb{C}$$ is finite-dimensional.
\end{lemma}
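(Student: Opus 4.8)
The plan is to recognize $V$ as the $\mathbb{C}$-span of the generating series of the $k$-kernel of $f$ and then to prove the Allouche--Shallit equivalence between $k$-regularity and finiteness of that kernel. First I would record, by iterating Definition~\ref{cartier}, the identity
\[
\Lambda_{r_n}\cdots\Lambda_{r_1}(F)(z)=\sum_{m\geqslant 0}f(k^n m+b)\,z^m,\qquad b=r_1+kr_2+\cdots+k^{n-1}r_n ,
\]
noting that, as $(r_1,\dots,r_n)$ runs over $\{0,\dots,k-1\}^n$, the offset $b$ runs over $\{0,1,\dots,k^n-1\}$. Thus $V$ is precisely the $\mathbb{C}$-span of the series $\sum_{m\geqslant 0}f(k^n m+b)\,z^m$ with $n\geqslant 0$ and $0\leqslant b<k^n$, and the lemma reduces to: $F$ is $k$-regular if and only if this span is finite-dimensional. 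I would treat the two implications separately.

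First assume $\dim_{\mathbb C}V<\infty$ (and $F\neq 0$, the case $F=0$ being trivial). Each Cartier operator carries a spanning generator $\Lambda_{r_n}\cdots\Lambda_{r_1}(F)$ to another one, $\Lambda_i\Lambda_{r_n}\cdots\Lambda_{r_1}(F)$, so $V$ is stable under $\Lambda_0,\dots,\Lambda_{k-1}$ and contains $F$ (the empty composition; at worst replace $V$ by the still finite-dimensional, $\Lambda$-stable space $V+\mathbb{C}F$). Pick a basis $G_1=F,G_2,\dots,G_D$ of $V$, set $\mathbf{G}=(G_1,\dots,G_D)^T$, and let $\mathbf{A}_0,\dots,\mathbf{A}_{k-1}\in\mathbb{C}^{D\times D}$ satisfy $\Lambda_i(\mathbf{G})=\mathbf{A}_i\mathbf{G}$; iterating gives $\Lambda_{i_s}\cdots\Lambda_{i_0}(\mathbf{G})=\mathbf{A}_{i_0}\mathbf{A}_{i_1}\cdots\mathbf{A}_{i_s}\mathbf{G}$. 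If $(n)_k=i_s\cdots i_0$, then $f(n)$ is the constant term of $\Lambda_{i_s}\cdots\Lambda_{i_0}(F)$; taking the constant term of the first component of the preceding matrix identity and transposing yields $\boldsymbol\ell:=\mathbf{G}(0)$ and $\mathbf{c}:=\mathbf{e}_1$ with $f(n)=\boldsymbol\ell^T\mathbf{A}_{i_s}^T\cdots\mathbf{A}_{i_0}^T\mathbf{c}$. Since prepending a zero digit changes no constant term, this is independent of the choice of canonical base-$k$ expansion, so $F$ is $k$-regular with the matrices $\mathbf{A}_i^T$.

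Conversely, assume $F$ is $k$-regular, say $f(n)=\boldsymbol\ell^T\mathbf{A}_{(n)_k}\mathbf{c}$ where $\mathbf{A}_{(n)_k}$ is the product of the $\mathbf{A}_i$ read along the base-$k$ digits of $n$. Define the linear map $\Phi\colon\mathbb{C}^{D\times 1}\to\mathbb{C}[[z]]$ by $\Phi(\mathbf{w})(z)=\sum_{n\geqslant 0}\bigl(\boldsymbol\ell^T\mathbf{A}_{(n)_k}\mathbf{w}\bigr)z^n$; its image $W$ has $\dim W\leqslant D$ and contains $F=\Phi(\mathbf{c})$. A short computation with base-$k$ expansions gives $\Lambda_i\bigl(\Phi(\mathbf{w})\bigr)=\Phi(\mathbf{A}_i\mathbf{w})+c_i(\mathbf{w})$, where $c_i(\mathbf{w})$ is a scalar correcting only the coefficient of $z^0$. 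Since moreover $\Lambda_0$ fixes the constant series $1$ and $\Lambda_i(1)=0$ for $i\geqslant 1$, the finite-dimensional space $W+\mathbb{C}\cdot 1$ contains $F$ and is stable under every $\Lambda_i$; hence $V\subseteq W+\mathbb{C}\cdot 1$ and $\dim_{\mathbb C}V\leqslant D+1$. (Alternatively one first normalizes the representation so that $\boldsymbol\ell^T\mathbf{A}_0=\boldsymbol\ell^T$, whence $c_i(\mathbf{w})=0$ and $V\subseteq W$ directly.)

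The one genuinely delicate point is the base-$k$ bookkeeping --- leading zero digits and the index $n=0$ --- which is exactly why the harmless summand $\mathbb{C}\cdot 1$, or a normalization of the representation, is needed in the converse direction; everything else is finite-dimensional linear algebra. Lemma~\ref{ch6:Cartier} may be invoked to streamline the identities relating $F$ to its Cartier components $\Lambda_0(F),\dots,\Lambda_{k-1}(F)$.
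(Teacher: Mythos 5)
Your proof is correct. Note that the paper itself offers no proof of this lemma: it is stated as a restatement of Allouche and Shallit \cite[Theorem~2.2]{AS1992} (see also Becker \cite[Lemma~3]{B1994}), so there is no internal argument to compare against. What you have written is precisely the standard proof from that source, transported from the language of the $k$-kernel $\{(f(k^nm+b))_{m}:n\geqslant 0,\ 0\leqslant b<k^n\}$ to generating series via the identity $\Lambda_{r_n}\cdots\Lambda_{r_1}(F)(z)=\sum_{m}f(k^nm+b)z^m$ with $b=r_1+kr_2+\cdots+k^{n-1}r_n$. Both directions check out: the forward direction correctly reverses the order of the matrices (hence the transposes) to match the paper's convention $f(n)={\boldsymbol\ell}^T{\bf A}_{i_s}\cdots{\bf A}_{i_0}{\bf c}$, and the converse correctly isolates the only genuine subtlety, namely the constant-term discrepancy coming from the leading-zero ambiguity at $n=0$, which you absorb either into the summand $\mathbb{C}\cdot 1$ or by normalizing ${\boldsymbol\ell}^T{\bf A}_0={\boldsymbol\ell}^T$. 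The bookkeeping for $F\in V$ and for $F=0$ is also handled. No gaps.
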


If one lets $W$ denote the finitely generated $\mathbb{C}[z]$-submodule of the field of Laurent power series $\mathbb{C}((z))$ spanned by the finite-dimensional $\mathbb{C}$-vector space $V$, then $W$ has the property that $$W\subseteq \sum_{h(z)\in W} \mathbb{C}[z] h(z^k).$$  To see this, we let $\{F(z)=h_1(z),\ldots ,h_r(z)\}$ be a basis for $V$.  Then notice that for $i=0,\ldots ,k-1$, we have
$$\Lambda_i(h_j)(z) = \sum_{\ell=1}^{r} c_{i,j,\ell} h_{\ell}(z)$$ for some constants $c_{i,j,\ell}\in \mathbb{C}$.  An application of Lemma \ref{ch6:Cartier}(b) then gives that
\begin{equation}\label{hch}h_j(z) = \sum_{\ell=1}^r \left( \sum_{i=0}^{k-1} c_{i,j,\ell} z^i\right) h_{\ell}(z^k),\end{equation} which gives the desired claim.

In fact, in the case that the dimension of the vector space $V$ is $r<\infty$, we have that $F(z)$ is a $k$-Mahler function of degree at most $r$. To see this, we observe that \eqref{hch} can be written as $${\bf h}(z)={\bf A}(z){\bf h}(z^k),$$ where ${\bf h}(z):=[F(z)=h_1(z), h_2(z),\ldots,h_r(z)]^T$ and ${\bf A}(z)\in\mathbb{C}[z]^{r\times r}.$ Now we let ${\bf A}^{(i)}(z)={\bf A}(z){\bf A}(z^k)\cdots {\bf A}(z^{k^{i-1}})$, where we use the convention that ${\bf A}^{(0)}(z)$ is the identity. So for $i\in\{0,1,\ldots,r\}$, we have ${\bf h}(z^{k^i})={\bf A}^{(r-i)}(z^{k^i}){\bf h}(z^{k^r}).$ Left multiplying by the vector $e_1^T$ and using the fact that $h_1(z)=F(z)$ we obtain equations \begin{equation}\label{Fuh}F(z^{k^i})={\bf u}_i(z){\bf h}(z^{k^r}),\end{equation} for some ${\bf u}_i(z)\in\mathbb{C}[z]^{1\times r}$. Since we have $r+1$ vectors ${\bf u}_0(z),\ldots,{\bf u}_{r}(z)$, we have a nontrivial linear dependence; that is, there are polynomials $p_0(z),\ldots,p_r(z)$, not all zero, such that $$p_0(z){\bf u}_0(z)+\cdots+p_r(z){\bf u}_{r}(z)=0.$$ Combining this with \eqref{Fuh} shows that $F(z)$ satisfies the functional equation $$p_i(z)F(z^{k^i})+p_{i+1}(z)F(z^{k^{i+1}})+\cdots+p_r(z)F(z^{k^r})=0,$$ where $i$ is the smallest index such that $p_i(z)\neq 0$. But this implies that $F(z)$ is a $k$-Mahler function of degree at most $r-i\leqslant r$; see, e.g., Becker's argument \cite[p.~273]{B1994} of taking successive sections to reduce the smallest index.

\begin{lemma}[Kisielewski \cite{K2017}]\label{Kislemma8} Let $c(z)\in\mathbb{C}(z)$, $\alpha\in\mathbb{C}\setminus\{0\}$, and $\nu_\alpha(c(z))$ be the order of the zero of $c(z)$ at $z=\alpha$. There is an $r\in\{0,\ldots,k-1\}$ such that $\nu_\alpha\left(\Lambda_r (c)(z^k)\right)\leqslant \nu_\alpha\left(c(z)\right).$
\end{lemma}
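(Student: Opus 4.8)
The plan is to read the inequality straight off the Cartier decomposition in Lemma~\ref{ch6:Cartier}(b), using only that the order function $\nu_\alpha$ is a valuation. First I would note a preliminary point needed for the statement even to make sense: for a rational function $c(z)$, each section $\Lambda_i(c)$ is again rational (identify $c$ with its Laurent expansion at $0$; a root-of-unity filter gives $z^i\Lambda_i(c)(z^k)=\tfrac{1}{k}\sum_{j=0}^{k-1}\omega^{-ij}c(\omega^j z)$ with $\omega$ a primitive $k$-th root of unity, and this rational function involves only monomials $z^n$ with $n\equiv i\pmod k$, so dividing by $z^i$ produces a rational function of $z^k$). In particular Lemma~\ref{ch6:Cartier}(b) extends from $\mathbb{C}[[z]]$ to $\mathbb{C}((z))$ via Laurent expansions at the origin, and $\Lambda_r(c)(z^k)$ is a bona fide rational function, so that $\nu_\alpha$ of it is well defined.

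Then comes the heart of the matter. Put $m:=\nu_\alpha(c(z))$ and apply the decomposition
$$c(z)=\sum_{i=0}^{k-1}z^i\,\Lambda_i(c)(z^k).$$
Because $\alpha\neq 0$, the monomial $z^i$ is a unit in the local ring at $z=\alpha$, so $\nu_\alpha\bigl(z^i\Lambda_i(c)(z^k)\bigr)=\nu_\alpha\bigl(\Lambda_i(c)(z^k)\bigr)$ for every $i$. Since $\nu_\alpha$ satisfies the ultrametric inequality $\nu_\alpha(f+g)\geqslant\min\{\nu_\alpha(f),\nu_\alpha(g)\}$, summing the $k$ terms above yields
$$\nu_\alpha(c(z))\ \geqslant\ \min_{0\leqslant i\leqslant k-1}\nu_\alpha\bigl(z^i\Lambda_i(c)(z^k)\bigr)\ =\ \min_{0\leqslant i\leqslant k-1}\nu_\alpha\bigl(\Lambda_i(c)(z^k)\bigr),$$
and choosing $r$ to be an index attaining this minimum gives $\nu_\alpha(\Lambda_r(c)(z^k))\leqslant m=\nu_\alpha(c(z))$, as desired. (If $c\equiv 0$ the claim is vacuous; otherwise the minimising index $r$ necessarily satisfies $\Lambda_r(c)\not\equiv 0$, since $m<\infty$.)

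I do not anticipate any genuine obstacle: the single point requiring a word of care is the bookkeeping that makes $\nu_\alpha(\Lambda_r(c)(z^k))$ meaningful—i.e.\ that the sections of a rational function are themselves rational, hence meromorphic at $z=\alpha$—after which the lemma is an immediate consequence of Lemma~\ref{ch6:Cartier}(b) together with the valuation property of $\nu_\alpha$.
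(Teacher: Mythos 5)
Your argument is correct: extending the Cartier operators to Laurent expansions via the root-of-unity filter makes $\Lambda_r(c)(z^k)$ a well-defined rational function, and then Lemma~\ref{ch6:Cartier}(b) together with $\nu_\alpha(z^i)=0$ (since $\alpha\neq 0$) and the ultrametric inequality forces some summand to have order at most $\nu_\alpha(c(z))$. The paper itself gives no proof, deferring to Kisielewski's Lemma~8, and your argument is essentially the one found there, so there is nothing further to compare.
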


We will use the functional equation \eqref{MFE} in a slightly different form. For a Mahler function satisfying \eqref{MFE} of degree $d$, setting $${\bf F}(z):=[F(z), F(z^k),\ldots,F(z^{k^{d-1}})]^T$$ and $${\bf A}(z):=\left[\begin{matrix} -\frac{a_1(z)}{a_0(z)}\ \   -\frac{a_2(z)}{a_0(z)}\ \  \ \cdots & -\frac{a_d(z)}{a_0(z)} \\  {\bf I}_{(d-1)\times (d-1)} & {\bf 0}_{(d-1)\times 1}\end{matrix}\right],$$ we have \begin{equation}\label{FAF}{\bf F}(z)={\bf A}(z){\bf F}(z^k).\end{equation}
We will be specifically interested in the matrices \begin{equation}\label{Bz} {\bf B}_n(z):={\bf A}(z){\bf A}(z^k)\cdots {\bf A}(z^{k^{n-1}}).\end{equation} Note that ${\bf F}(z)={\bf B}_n(z){\bf F}(z^{k^n})$ for every $n\geqslant 1$. In what follows, for $i=1,\ldots,d$, we write $$e_i:=\left[{\bf 0}_{1\times (i-1)}\ 1\ {\bf 0}_{1\times (d-i)}\right]^T.$$

Kisielewski's lemma above states that a Cartier operator can be used to (possibly) reduce the order of a zero. We use this result in the following lemma to find an upper bound on the order of certain poles of the matrices ${\bf B}_n(z).$

\begin{lemma}\label{Buniform} Suppose $F(z)$ is $k$-regular, ${\bf B}_n(z)$ is as defined in \eqref{Bz}, and $\xi$ is a root of unity such that $\xi^k=\xi$. Then the poles at $z=\xi$ of the entries of the matrices $\{{\bf B}_n(z):n\geqslant 1\}$ have uniformly bounded order. In particular, there is a polynomial $h(z)\in\mathbb{C}[z]$ such that for each $n$ the matrix $h(z)\cdot {\bf B}_n(z)$ has polynomial entries.
\end{lemma}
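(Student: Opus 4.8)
The plan is to reduce the statement to a uniform bound on the order of the pole at $z=\xi$ of the entries of the \emph{first row} of the matrices ${\bf B}_n(z)$, and then to obtain that bound by following the whole finite–dimensional space $V$ of Lemma~\ref{cartvec} under the Cartier operators. First I would observe that, since ${\bf A}(z)$ is a companion matrix, an easy induction on $n$ shows that for $1\leqslant i\leqslant n+1$ the $i$-th row of ${\bf B}_n(z)$ equals the first row of ${\bf B}_{n-i+1}(z)$ evaluated at $z^{k^{i-1}}$, while for $i>n+1$ the $i$-th row is a standard coordinate covector; as $\xi^{k^{i-1}}=\xi$ and $z\mapsto z^{k^{i-1}}$ is unramified at $\xi$, it suffices to bound $-\nu_\xi\big(v_{n,j}(z)\big)$ uniformly, where $v_{n,j}(z):=({\bf B}_n(z))_{1,j}\in\mathbb{C}(z)$. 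The relation ${\bf F}(z)={\bf B}_n(z){\bf F}(z^{k^n})$ reads, in its first coordinate, $F(z)=\sum_{j=1}^{d}v_{n,j}(z)\,F\big(z^{k^{n+j-1}}\big)$. Setting $\mathcal{M}:=\big\langle F(z^{k^i}):i\geqslant 0\big\rangle_{\mathbb{C}(z)}$, one has $\dim_{\mathbb{C}(z)}\mathcal{M}=d$ (this is the Mahler degree), and for every $m\geqslant 0$ the tuple $\big(F(z^{k^m}),\dots,F(z^{k^{m+d-1}})\big)$ is a $\mathbb{C}(z)$-basis of $\mathcal{M}$ (the minimal equation \eqref{MFE}, substituting $z\mapsto z^{k^m}$, expresses $F(z^{k^m})$ in terms of the next $d$ iterates). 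Hence $(v_{n,j}(z))_{j=1}^{d}$ is exactly the coordinate vector of $F$ in the $n$-th such basis; in particular the coordinate vector of $F$ in the $0$-th basis is $(1,0,\dots,0)$.

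Next I would push the Cartier operators through this identity. Using Lemma~\ref{ch6:Cartier}(a) repeatedly to peel $\ell$ powers of $k$ off each $F\big(z^{k^{n+j-1}}\big)$, one gets, for $1\leqslant\ell\leqslant n$ and any Cartier word $(r_1,\dots,r_\ell)$,
\[
\Lambda_{r_\ell}\cdots\Lambda_{r_1}(F)(z)=\sum_{j=1}^{d}\Lambda_{r_\ell}\cdots\Lambda_{r_1}(v_{n,j})(z)\,F\big(z^{k^{(n-\ell)+j-1}}\big).
\]
Since Cartier operators map $\mathbb{C}(z)$ into itself, and since taking $n=\ell$ above shows each $\Lambda_{r_\ell}\cdots\Lambda_{r_1}(F)$ lies in $\mathcal{M}$, we conclude that $V\subseteq\mathcal{M}$, that $V$ is stable under each $\Lambda_r$, and that the coordinate vector of $\Lambda_{r_\ell}\cdots\Lambda_{r_1}(F)$ in the $(n-\ell)$-th basis is $\big(\Lambda_{r_\ell}\cdots\Lambda_{r_1}(v_{n,j})\big)_{j=1}^{d}$. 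Because $V$ is finite–dimensional over $\mathbb{C}$, its image under the $\mathbb{C}(z)$-linear coordinate isomorphism $\mathcal{M}\cong\mathbb{C}(z)^{d}$ attached to any fixed basis is finite–dimensional over $\mathbb{C}$, so the pole orders at $\xi$ of coordinate entries of nonzero elements of $V$ are bounded; thus
\[
C_n:=\max\Big\{-\nu_\xi(g_j)\ :\ \textstyle\sum_{j}g_j(z)F\big(z^{k^{n+j-1}}\big)\in V\setminus\{0\},\ 1\leqslant j\leqslant d\Big\}
\]
is a well-defined integer, and $C_0\geqslant 0$ (take $g=F$, $j=1$).

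The key step is to show $C_n\leqslant C_{n-1}$ for every $n\geqslant 1$; this is where Lemma~\ref{Kislemma8} is used and, I expect, the only real subtlety — one must follow the entire space $V$, not just $F$, and measure pole orders of coordinate \emph{vectors}. Given $g=\sum_j g_j(z)F(z^{k^{n+j-1}})\in V\setminus\{0\}$ and an index $j_0$ with $-\nu_\xi(g_{j_0})=C_n$, apply Lemma~\ref{Kislemma8} to $c=g_{j_0}\in\mathbb{C}(z)$ and $\alpha=\xi\neq 0$: there is $r\in\{0,\dots,k-1\}$ with $\nu_\xi\big(\Lambda_r(g_{j_0})(z^k)\big)\leqslant\nu_\xi(g_{j_0})$, and since $\xi^k=\xi$ with $z\mapsto z^k$ unramified at $\xi$ this gives $\nu_\xi\big(\Lambda_r(g_{j_0})\big)\leqslant -C_n$. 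By the displayed identity with $\ell=1$, the coordinate vector of $\Lambda_r(g)\in V$ in the $(n-1)$-st basis is $(\Lambda_r(g_j))_j$; its $j_0$-th entry has a pole of order at least $C_n$ at $\xi$ (hence in particular $\Lambda_r(g)\neq 0$), so $C_{n-1}\geqslant C_n$. Iterating, $C_n\leqslant C_0$ for all $n$; applying this to $g=F$ bounds $-\nu_\xi(v_{n,j})$ by $C_0$ for all $n,j$, and the row reduction above extends the bound to all entries of all ${\bf B}_n(z)$ with $n\geqslant 1$. Finally, $h(z)=(1-\overline{\xi}\,z)^{C_0}$ kills the pole at $\xi$ in every ${\bf B}_n(z)$; combining such factors over the finitely many roots of unity $\xi$ with $\xi^k=\xi$ (together, if necessary, with a power of $z$) produces the polynomial $h(z)$ claimed in the "in particular" clause.
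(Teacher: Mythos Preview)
Your proof is correct and follows essentially the same route as the paper's: both reduce to the first row via the companion structure of ${\bf A}(z)$, write the first-row entries as coordinates of $F$ in the $n$-th basis $\{F(z^{k^{n}}),\dots,F(z^{k^{n+d-1}})\}$, iterate Kisielewski's lemma together with the identity $\nu_\xi(c(z))=\nu_\xi(c(z^k))$ (valid because $\xi^k=\xi$) to transport pole orders down to the $0$-th basis, and then invoke finite-dimensionality of $V$ to conclude. The only cosmetic difference is that the paper applies $n$ Cartier operators directly to a single entry $c_{i,n}$ and bounds by the fixed $h(z)$ clearing denominators of a basis of $V$, whereas you package the same induction via the decreasing sequence $C_n$.
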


\begin{proof} For each $i\in\{1,\ldots,d\}$ and each $n\in\mathbb{N}$, set $$c_{i,n}(z):=e_1^T {\bf B}_n(z) e_i.$$ Then for each $n$ we have \begin{equation}\label{FcFn} F(z)=\sum_{i=1}^d c_{i,n}(z)F(z^{k^{i+n-1}}).\end{equation} If we apply $n$ Cartier operators to \eqref{FcFn}, we have \begin{equation}\label{CartFcFn} \Lambda_{r_n}\cdots \Lambda_{r_1}(F)(z)=\sum_{i=1}^d \Lambda_{r_n}\cdots \Lambda_{r_1}(c_{i,n})(z)\cdot F(z^{k^{i-1}}).\end{equation} Since $d$ here is minimal, the functions $F(z),\ldots,F(z^{k^{d-1}})$ are linearly independent over $\mathbb{C}(z)$.

Now suppose $F(z)$ is $k$-regular. Since the $\mathbb{C}$-vector space $V$ defined in Lemma \ref{cartvec} is finite-dimensional, its finite number of generators are of the form $\sum_{i=1}^d h_i(z) F(z^{k^{i-1}}),$ for some rational functions $h_i(z)$. Since, as we run over a finite generating set, the $h_i(z)$ that occur are a finite number of rational functions and the functions $F(z),\ldots,F(z^{k^{d-1}})$ are linearly independent over $\mathbb{C}(z)$, there is a nonzero polynomial $h(z)$ such that $$V\subseteq h(z)^{-1}\sum_{i=1}^d \mathbb{C}[z]F(z^{k^{i-1}}).$$ This,
%together with \eqref{CartFcFn}
requires for every $i\in\{1,\ldots,d\}$, $n\in\mathbb{N}$ and choice of Cartier operators that the inequality \begin{equation}\label{cartup}\nu_\xi\left(h(z)^{-1}\right)\leqslant\nu_\xi\left(\Lambda_{r_n}\cdots \Lambda_{r_1}(c_{i,n})(z)\right)\end{equation} holds between orders of the zeros at $z=\zeta$. Since $\xi^k=\xi$, for each rational function $c(z)$, we have $\nu_\xi\left(c(z)\right)=\nu_\xi\left(c(z^k)\right)$. By Lemma \ref{Kislemma8} and \eqref{cartup}, for each $n\in\mathbb{N}$ there is a choice of Cartier operators $\Lambda_{r_1},\ldots, \Lambda_{r_n}$ such that, for zero orders, $$ \nu_\xi\left(h(z)\right)\leqslant \nu_\xi\left(\Lambda_{r_n}\cdots \Lambda_{r_1}(c_{i,n})(z)\right) =\nu_\xi\big(\Lambda_{r_n}\cdots \Lambda_{r_1}(c_{i,n})(z^{k^n})\big)\leqslant \nu_\xi\left(c_{i,n}(z)\right).
$$ Thus the poles of the entries $c_{i,n}(z)$ of the first row of the matrices ${\bf B}_n(z)$ at $z=\xi$ have uniformly bounded order; specifically, they are bounded above by $\nu_\zeta(h(z))$.

It remains now to show this for the rest of the rows, but this follows due to the structure of the matrix ${\bf A}(z)$. In fact, consider the $(i,j)$ entry of the matrix ${\bf B}_n(z)$ for some $i\in\{2,\ldots,d\}$. Using the definition of ${\bf B}_n(z)$, we have $$e_i^T{\bf B}_n(z)e_j=e_i^T{\bf A}(z){\bf B}_{n-1}(z^k)e_j.$$ Now, $e_i^T{\bf A}(z)=e_{i-1}^T.$ So, \begin{equation}\label{eidown}\nu_\xi(e_i^T{\bf B}_n(z)e_j)=\nu_\xi(e_{i-1}^T{\bf B}_{n-1}(z^k)e_j)=\nu_\xi(e_{i-1}^T{\bf B}_{n-1}(z)e_j),\end{equation} where the last equality uses, again, the facts that $\xi^k=\xi$ and for every rational function $\nu_\xi\left(c(z)\right)=\nu_\xi\left(c(z^k)\right)$. Applying \eqref{eidown} $i-1$ times, we have \begin{equation}\label{e1down}\nu_\xi(e_i^T{\bf B}_n(z)e_j)=\nu_\xi(e_{1}^T{\bf B}_{n-i+1}(z)e_j),\end{equation} which immediately implies the desired result.
\end{proof}

\begin{proposition}\label{prop0} If $F(z)$ is $k$-Mahler satisfying \eqref{MFE} of degree $d$, $a_0(\xi)=0$ for some root of unity $\xi$ with $\xi^k=\xi$, and $\gcd(a_0(z),a_1(z),\ldots,a_d(z))=1$, then $F(z)$ is not $k$-regular.
\end{proposition}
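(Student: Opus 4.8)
The plan is to argue by contradiction: assume $F(z)$ is $k$-regular and derive a contradiction from the hypothesis that $a_0(\xi)=0$ at some root of unity $\xi$ with $\xi^k=\xi$, while $\gcd(a_0,\ldots,a_d)=1$. The starting point is Lemma~\ref{Buniform}, which tells us that, since $F(z)$ is $k$-regular, there is a fixed polynomial $h(z)$ so that $h(z){\bf B}_n(z)$ has polynomial entries for every $n\geqslant 1$. Equivalently, the poles at $z=\xi$ of the entries of ${\bf B}_n(z)$ are bounded, uniformly in $n$, by some constant $m:=\nu_\xi(h(z))$. Our task is to show this uniform bound is impossible: the pole at $z=\xi$ of (say) the $(1,d)$ entry $c_{d,n}(z)=e_1^T{\bf B}_n(z)e_d$ must grow with $n$, because each composition factor ${\bf A}(z^{k^j})$ contributes a genuine pole at $z=\xi$ coming from the factor $1/a_0(z^{k^j})$, and these poles cannot all cancel.

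First I would record the bottom-left structure of ${\bf A}(z)$ and its powers. Reading off \eqref{FAF} and \eqref{Bz}, the recursion ${\bf B}_n(z)={\bf A}(z){\bf B}_{n-1}(z^k)$ together with $e_i^T{\bf A}(z)=e_{i-1}^T$ for $i\geqslant 2$ gives, exactly as in the proof of Lemma~\ref{Buniform}, that the bottom row $e_d^T{\bf B}_n(z)$ equals $e_1^T{\bf B}_{n-d+1}(z)$ (after the substitution $z\mapsto z$, using $\xi^k=\xi$ to ignore the $z\mapsto z^k$ changes when computing $\nu_\xi$). So it suffices to track a single entry, say $\psi_n(z):=e_d^T{\bf B}_n(z)e_d$, the bottom-right entry. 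Expanding the matrix product, $\psi_n(z)$ is, up to the combinatorics of the companion form, a sum of terms each of which is a product of $d$ of the rational functions $-a_j(z^{k^i})/a_0(z^{k^i})$ spread across the indices $i=0,\ldots,n-1$; crucially, \emph{every} summand has a denominator divisible by $a_0(z)a_0(z^k)\cdots a_0(z^{k^{n-1}})$, because the companion matrix ${\bf A}(z)$ has $1/a_0(z)$ appearing in (and only in) the top row, and any path through the product that reaches the $(d,d)$ slot must pick up one factor of $1/a_0$ at \emph{each} level. Since $a_0(\xi)=0$ and $\xi^{k^i}=\xi$ for all $i$, each of the $n$ factors $a_0(z^{k^i})$ vanishes at $z=\xi$ to order $\nu_\xi(a_0)\geqslant 1$. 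Hence $\nu_\xi(\psi_n(z))\leqslant -n$ \emph{unless} the numerator of $\psi_n$ (after clearing denominators) also vanishes at $z=\xi$.

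The heart of the argument, and the step I expect to be the main obstacle, is ruling out this cancellation: I must show the numerator of $\psi_n$ does not vanish at $\xi$ to the full order $n\nu_\xi(a_0)$, or at least that the pole order is unbounded in $n$. Here is where $\gcd(a_0,a_1,\ldots,a_d)=1$ and the minimality of $d$ (hence $\mathbb{C}(z)$-linear independence of $F(z),F(z^k),\ldots,F(z^{k^{d-1}})$) enter. One clean way: use the relation ${\bf F}(z)={\bf B}_n(z){\bf F}(z^{k^n})$, take the last coordinate, and localise at $z=\xi$. If the pole of ${\bf B}_n$ at $\xi$ were bounded by $m$ for all $n$, then $(z-\xi)^m F(z^{k^{n}})$ would be ``seen'' by ${\bf B}_n$ only through a matrix whose pole is bounded, which — combined with the fact that $F$ and its $k$-power substitutions are analytic, hence bounded, in a punctured neighbourhood of $\xi$ (if $|\xi|<1$ replace $\xi$ by a conjugate point; $\xi$ being a root of unity it lies on $|z|=1$, so instead work in $\mathbb{C}((z-\xi))$ formally) — forces a nontrivial $\mathbb{C}$-linear relation among $F(z),\ldots,F(z^{k^{d-1}})$ with coefficients in the residue field, contradicting linear independence. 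Alternatively, and perhaps more robustly, I would argue directly on valuations: clearing the common denominator $a_0(z)$ in \eqref{MFE} and iterating, one gets $a_0(z)a_0(z^k)\cdots a_0(z^{k^{n-1}})$ times $F(z)$ expressed as a polynomial combination of $F(z^{k^j})$'s; the $\gcd$ condition guarantees that not all $a_j$ vanish at $\xi$, so the ``transfer matrix'' ${\bf A}(z)$ is genuinely singular at $\xi$ (its entries have a pole there that is not removable), and composing $n$ such genuinely singular maps whose singular locus is the \emph{same} point $\xi$ (thanks to $\xi^k=\xi$) produces a pole growing linearly in $n$. That unbounded growth contradicts Lemma~\ref{Buniform}, and the proposition follows.

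I should flag that the subtlety is precisely that a product of matrices with poles at a common point can, a priori, have bounded pole order if there is systematic cancellation (e.g. a nilpotent or block structure), so the proof must exploit the specific companion shape of ${\bf A}(z)$ together with $\gcd(a_0,\ldots,a_d)=1$ to exclude that. Tracking the single bottom-right entry $\psi_n$, whose denominator provably accumulates one factor $1/a_0$ per level with no numerator that can absorb it — on pain of violating the minimality/linear-independence of the Mahler equation — is the route I would push through.
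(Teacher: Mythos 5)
You have the right frame---assume $k$-regularity, invoke Lemma~\ref{Buniform} to get a uniform bound $Y$ on the pole order at $z=\xi$ of the entries of $\{{\bf B}_n(z)\}$, then contradict it by showing the pole order must grow---but your write-up stops exactly where the real work begins. You identify "ruling out cancellation" as the main obstacle, and neither of the routes you sketch actually does it: the first (localising ${\bf F}(z)={\bf B}_n(z){\bf F}(z^{k^n})$ at $z=\xi$ to extract a forbidden linear relation) is not developed far enough to check, and the second ("composing $n$ genuinely singular maps with the same singular point produces a pole growing linearly in $n$") is precisely the assertion needing proof, which, as you concede, fails for general matrix products. There is also a concrete error in your structural claim: it is not true that every summand of $e_d^T{\bf B}_n(z)e_d$ carries the denominator $a_0(z)a_0(z^k)\cdots a_0(z^{k^{n-1}})$. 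In the companion matrix ${\bf A}(z)$ only the first row contains $1/a_0$; a path through the product that sits in rows $2,\ldots,d$ at a given level contributes the factor $1$ there, not $1/a_0$, so a summand picks up one $1/a_0$ only each time its path returns to row $1$, which happens roughly between $n/d$ and $n$ times depending on the path. The "one pole per level" heuristic therefore does not hold term by term, and in any case the numerators $a_j(z^{k^i})$ may themselves vanish at $\xi$.

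The paper closes the gap with a valuation argument that isolates a single dominant term so that cancellation is impossible. Take $Y$ to be the \emph{minimal} uniform bound; let $N$ be the \emph{smallest} index at which $\nu_\xi(a_i(z)/a_0(z))$ attains its minimum $-X<0$ (here $X>0$ uses the gcd hypothesis); and let $m$ be the \emph{smallest} $n$ for which pole order $Y$ is attained, necessarily in the first row of ${\bf B}_m(z)$, say in the $(1,J)$ entry, with all other rows of ${\bf B}_m(z)$ having strictly smaller pole order there (this uses the shift identity $\nu_\xi(e_i^T{\bf B}_n(z)e_j)=\nu_\xi(e_1^T{\bf B}_{n-i+1}(z)e_j)$, valid because $\xi^k=\xi$). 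Writing $e_1^T{\bf B}_{m+N}(z)e_J=-\sum_{i=1}^{d} a_i(z)b_i(z)/a_0(z)$ with $b_i(z):=e_i^T{\bf B}_{m+N-1}(z^k)e_J$, these two minimality choices force the $i=N$ term to have $\xi$-valuation exactly $-X-Y$ while every other term has valuation strictly greater; hence $\nu_\xi(e_1^T{\bf B}_{m+N}(z)e_J)=-X-Y<-Y$, contradicting the bound. Some device of this kind---pinning down a unique extremal summand via minimal choices of both the column index and the iteration length---is what your proposal is missing.
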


\begin{proof} Towards a contradiction, assume that $F(z)$ is $k$-regular and suppose that $\xi$ is a root of unity with $\xi^k=\xi$ such that $a_0(\xi)=0$. Using Lemma \ref{Buniform}, let $Y$ denote the minimal uniform bound of the order of the poles at $z=\xi$ of $\{{\bf B}_n(z):n\geqslant 1\}$, and note that $Y>0$ since $\gcd(a_0(z),a_1(z),\ldots,a_d(z))=1$.

We examine the first row of ${\bf B}_1(z)={\bf A}(z)$. In particular, set $$N:=\min\Big\{i\in\{1,\ldots,d\}: \nu_\xi\left(\frac{a_i(z)}{a_0(z)}\right)\leqslant \nu_\xi\left(\frac{a_j(z)}{a_0(z)}\right) \mbox{for all $j\in\{1,\ldots,d\}$}\Big\},$$ and note, again using $\gcd(a_0(z),a_1(z),\ldots,a_d(z))=1$, that \begin{equation}\label{X}X:=-\nu_\xi\left(\frac{a_N(z)}{a_0(z)}\right)>0.\end{equation} By the minimality of $N$, we have both $$\nu_\xi(a_i(z)/a_0(z))>-X\quad\mbox{for}\quad i<N,$$ and $$\nu_\xi(a_i(z)/a_0(z))\geqslant -X\quad\mbox{for}\quad i>N.$$

Since ${\bf B}_1(z)={\bf A}(z)$ has only constant entries outside of its first row, \eqref{eidown} and \eqref{e1down} imply there is some minimal $n$, say $m$, for which the maximal order of the pole at $z=\xi$ of the entries of ${\bf B}_m(z)$ is $Y$, occurs in the first row of ${\bf B}_m(z)$, say in the $(1,J)$ entry, and all of the other rows have entries with poles at $z=\xi$ of order strictly less than $Y$. That is, specifically, within the $J$-th column of ${\bf B}_m(z)$, we have \begin{equation}\label{Y}\nu_\xi\left(e_1^T{\bf B}_m(z)e_J\right)=-Y<0\quad\mbox{and}\quad \nu_\xi\left(e_i^T{\bf B}_m(z)e_J\right)>-Y,\end{equation} for each $i\in\{2,\ldots,d\}$.

Now, define the rational functions $b_1(z),\ldots,b_d(z)$ by $${\bf B}_{m+N-1}(z^k)e_J=\left[\begin{matrix} b_1(z)\ \cdots\ b_d(z)\end{matrix}\right]^T,$$ and note that by \eqref{e1down} we have, since $\xi^k=\xi$ and for every rational function $\nu_\xi\left(c(z)\right)=\nu_\xi\left(c(z^k)\right)$, that \begin{equation}\label{Y}-Y=\nu_\xi(e_1^T{\bf B}_m(z)e_J)=\nu_\xi(e_{N}^T{\bf B}_{m+N-1}(z)e_J)=\nu_\xi(b_N(z)).\end{equation} By the minimality of $m$, we have $$\nu_\xi(b_i(z))>-Y\quad\mbox{for}\quad i>N,$$ and trivially $$\nu_\xi(b_i(z))\geqslant -Y\quad\mbox{and}\quad i<N.$$

Let us now see how we can put together the results of the previous paragraphs in order to obtain the desired result. Consider the first entry of the $J$th column of ${\bf B}_{m+N}(z)$. We have \begin{align} \nonumber e_1^T{\bf B}_{m+N}(z)e_J
&=e_1^T{\bf A}(z){\bf B}_{m+N-1}(z^k)e_J\\
\nonumber &=\left[\begin{matrix} -\frac{a_1(z)}{a_0(z)}\ \   -\frac{a_2(z)}{a_0(z)}\ \   \cdots & -\frac{a_d(z)}{a_0(z)}\end{matrix}\right]{\bf B}_{m+N-1}(z^k)e_J\\
\label{finalnumber}&=-\sum_{i=1}^{N-1}\frac{a_i(z)b_i(z)}{a_0(z)} -\frac{a_N(z)b_N(z)}{a_0(z)}-\sum_{i=N+1}^{d}\frac{a_i(z)b_i(z)}{a_0(z)}.\end{align}
For $i\neq N$, using the comments immediately below Equations \eqref{Y} and \eqref{X}, respectively, we have \begin{equation}\label{notN}\nu_\xi\left(\frac{a_i(z)b_i(z)}{a_0(z)}\right)=\nu_\xi\left(\frac{a_i(z)}{a_0(z)}\right)+\nu_\xi\left(b_i(z)\right)>-X-Y,\end{equation} since both $\nu_\xi\left({a_i(z)/a_0(z)}\right)>-X$ for $i\in\{1,\ldots,N-1\}$ and $\nu_\xi\left(b_i(z)\right)>-Y$ for $i\in\{N+1,\ldots,d\}$. Also, by \eqref{Y} and \eqref{X}, we have \begin{equation}\label{N}\nu_\xi\left(\frac{a_N(z)b_N(z)}{a_0(z)}\right)=\nu_\xi\left(\frac{a_N(z)}{a_0(z)}\right)+\nu_\xi\left(b_N(z)\right)=-X-Y.\end{equation} Hence, using \eqref{notN} and \eqref{N}, Equation \eqref{finalnumber} gives the inequality  $$\nu_\xi\left(e_1^T{\bf B}_{m+N}(z)e_J\right)=-X-Y<-Y,$$ contradicting that $Y$ is a uniform bound on the pole order at $z=\xi$ over all $e_i^T{\bf B}_n(z)e_j$. Thus $F(z)$ is not $k$-regular.
\end{proof}

\begin{corollary}\label{zetaN=zeta} Suppose $F(z)$ is $k$-regular satisfying \eqref{MFE} of degree $d$. Let $\mathfrak{I}$ be the ideal of polynomials $p(z)$ such that $$p(z)F(z) \in \sum_{j\geqslant 1} \mathbb{C}[z]F(z^{k^j})$$ and let $q(z)$ be a generator for $\mathfrak{I}$. If $\xi$ is a zero of $q(z)$ such that $\xi^{k^M}=\xi$ for some $M\geqslant 1$, then $\xi=0$.
\end{corollary}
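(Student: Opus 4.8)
The plan is to deduce the statement from Proposition~\ref{prop0}, applied not with $k$ itself but with a suitable power $k^M$ of $k$.

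First I would note that $\mathfrak{I}\neq(0)$, so $q(z)\neq0$: since $F(z)$ is $k$-regular it is $k$-Mahler, and rearranging \eqref{MFE} gives $a_0(z)F(z)=-\sum_{i=1}^{d}a_i(z)F(z^{k^i})\in\sum_{j\geqslant1}\mathbb{C}[z]F(z^{k^j})$, so $a_0(z)\in\mathfrak{I}$ and hence $q(z)\mid a_0(z)$. If $q(z)$ is constant there is nothing to prove; otherwise $\mathfrak{I}\neq\mathbb{C}[z]$, so $F(z)\neq0$. Let $\xi$ be a zero of $q(z)$ with $\xi^{k^M}=\xi$ for some $M\geqslant1$, and suppose toward a contradiction that $\xi\neq0$; then $\xi^{k^M-1}=1$, so $\xi$ is a root of unity.

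The second step is to pass from base $k$ to base $k^M$. Each $k^M$-Cartier operator is a composition of $M$ of the $k$-Cartier operators of Definition~\ref{cartier} (writing $0\leqslant s<k^M$ in base $k$ as $s_{M-1}\cdots s_0$, one has $\Lambda^{(k^M)}_s=\Lambda_{s_{M-1}}\cdots\Lambda_{s_0}$), so the $\mathbb{C}$-vector space that the criterion of Lemma~\ref{cartvec} attaches to $F(z)$ in base $k^M$ is a subspace of the one it attaches to $F(z)$ in base $k$; hence $F(z)$ is $k^M$-regular, and therefore also $k^M$-Mahler. Fix a minimal-degree $k^M$-Mahler equation for $F(z)$,
\[\alpha_0(z)F(z)+\alpha_1(z)F(z^{k^M})+\cdots+\alpha_e(z)F\bigl(z^{k^{Me}}\bigr)=0,\]
with $\alpha_0(z),\ldots,\alpha_e(z)\in\mathbb{C}[z]$, $\alpha_0(z)\alpha_e(z)\neq0$, and (after dividing out the content) $\gcd(\alpha_0,\ldots,\alpha_e)=1$; here $e\geqslant1$ since $F(z)\neq0$. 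Rearranging this equation and using $\sum_{j\geqslant1}\mathbb{C}[z]F(z^{k^{Mj}})\subseteq\sum_{j\geqslant1}\mathbb{C}[z]F(z^{k^j})$ shows $\alpha_0(z)\in\mathfrak{I}$, so $q(z)\mid\alpha_0(z)$ and hence $\alpha_0(\xi)=0$.

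Finally, all the hypotheses of Proposition~\ref{prop0} hold with $k^M\geqslant2$ in place of $k$ — the preliminary material (Lemmas~\ref{ch6:Cartier}, \ref{cartvec}, \ref{Kislemma8} and \ref{Buniform}, together with the proof of Proposition~\ref{prop0}, is valid over any base $\geqslant2$): the function $F(z)$ is $k^M$-Mahler, satisfies the minimal degree-$e$ equation above with $\gcd(\alpha_0,\ldots,\alpha_e)=1$, and $\alpha_0(\xi)=0$ for the root of unity $\xi$ with $\xi^{k^M}=\xi$. Proposition~\ref{prop0} then forces $F(z)$ not to be $k^M$-regular, contradicting the second step, so $\xi=0$. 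The one point that requires care is this base change: one must check that the lemmas underpinning Proposition~\ref{prop0} hold verbatim over the base $k^M$ and that the hypothesis ``$\xi^{k}=\xi$'' there is supplied precisely by the available relation ``$\xi^{k^M}=\xi$''. The divisibility $q(z)\mid\alpha_0(z)$, obtained cheaply through membership in $\mathfrak{I}$, is exactly what lets the base change go through without any direct comparison between the minimal $k$-Mahler and $k^M$-Mahler equations of $F(z)$.
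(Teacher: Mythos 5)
Your proposal is correct and follows essentially the same route as the paper: pass to the base $k^M$, take a minimal $k^M$-Mahler equation with coprime coefficients, observe that its leading coefficient lies in $\mathfrak{I}$ so that $q(z)$ divides it and hence vanishes at $\xi$, and then invoke Proposition~\ref{prop0} over the base $k^M$ against the $k^M$-regularity of $F(z)$. The only cosmetic difference is that you prove the implication ``$k$-regular $\Rightarrow$ $k^M$-regular'' directly via composition of Cartier operators, where the paper simply cites Allouche--Shallit.
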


\begin{proof} Suppose that there exists $M\geqslant 1$ and $\xi$ such that $q(\xi)=0$ with $\xi^{k^M}=\xi$ and $\xi\neq 0$.  Let $$q_0(z)F(z) + q_1(z) F(z^{k^M})+ \cdots + q_D(z) F(z^{k^{MD}}) = 0$$ be a relation with $q_0(z)\neq 0$, $\gcd(q_0(z),q_1(z),\ldots,q_D(z))=1$ and $D$ minimal.  Then $q(z)$ divides $q_0(z)$ and so $q_0(\xi)=0$.  But $\xi^{k^M}=\xi$ and by Proposition \ref{prop0} this contradicts the fact that $F(z)$ is $k^M$-regular. Since $F(z)$ is $k^M$-regular if and only if it is $k$-regular \cite[Theorem~2.9]{AS1992}, this proves the corollary.
\end{proof}

\begin{lemma}\label{lem:norou} Let $F(z)$ be a $k$-regular power series satisfying \eqref{MFE} of degree $d$. Then there exist a polynomial $Q(z)$ with $Q(0)\neq 0$ such that $1/Q(z)$ is $k$-regular and a nonnegative integer $\gamma$ such that  $G(z):={F(z)}/{z^{\gamma } Q(z)}$ satisfies a Mahler-type functional equation $$q_0(z)G(z)+q_1(z)G(z^k)+\cdots+q_d(z)G(z^{k^d})=0,$$ of degree $d$, $q_i(z)\in\mathbb{C}[z]$ with $q_0(0)\neq 0$, and if $\zeta$ is a zero of $q_0(z)$ that is a root of unity then there is some $M\geqslant 1$ such that $\zeta^{k^M}=\zeta$.
\end{lemma}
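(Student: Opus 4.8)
The plan is to peel off the bad zeros of $a_0(z)$ one orbit at a time. Recall that the relevant obstruction, by Corollary \ref{zetaN=zeta}, comes from zeros $\zeta$ of $a_0(z)$ that are roots of unity but do \emph{not} satisfy $\zeta^{k^M}=\zeta$ for any $M\ge 1$; equivalently (since $\zeta$ is a root of unity), roots of unity whose order is coprime to $k$, together with roots of unity of order $p^a m$ with $p\mid k$, $\gcd(m,k)=1$ and $a$ not "absorbed" by iterating $z\mapsto z^k$. I would collect all such zeros into the set $A$ described after the statement of Theorem \ref{main}, and for each $\zeta\in A$ record its multiplicity $\nu_\zeta(a_0(z))$. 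The key dynamical observation is that for $\zeta$ a root of unity of order coprime to $k$ (or more generally $\zeta\in A$), the map $\zeta\mapsto\zeta^k$ is a permutation of a finite orbit, and one can choose $N\ge 1$ so that for every $\zeta\in A$ the element $\overline\zeta^{\,k^N}$ behaves predictably under the substitutions $z\mapsto z^{k^j}$; this $N$ is what governs the product defining $Q(z)$.

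First I would set $Q(z):=\prod_{\zeta\in A}\prod_{j=0}^{N-1}(1-z^{k^j}\overline\zeta^{\,k^N})^{\nu_\zeta(a_0(z))}$ and verify the three claimed properties of $Q$. That $Q(0)=1\ne 0$ is immediate from the shape of the factors. That $1/Q(z)$ is $k$-regular follows because each factor $1-z^{k^j}\overline\zeta^{\,k^N}$ has reciprocal $\sum_{m\ge0}(\overline\zeta^{\,k^N})^m z^{k^j m}$, whose coefficient sequence is supported on multiples of $k^j$ and is eventually periodic-like in the exponent in a way that is visibly $k$-regular (a finite product of such series is again $k$-regular, since $k$-regular power series form a ring by \cite{AS1992}); alternatively one invokes that $1/Q$ is $k$-Becker — it satisfies a Mahler equation with $a_0=1$ obtained directly from the factorisation — hence $k$-regular by Becker's theorem \cite[Theorem 2]{B1994}. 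Then I would introduce $\gamma$: writing $F(z)=z^{\gamma_0}F_1(z)$ with $F_1(0)\ne0$ is not quite what is needed; instead $\gamma$ is forced later by the requirement that $G$ be such that the new equation has $q_0(0)\ne0$, so I would keep $\gamma$ as a parameter and fix it at the end.

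Next comes the substitution into the functional equation. Dividing \eqref{MFE} by $z^\gamma Q(z)$ after replacing $F(z^{k^i})=z^{\gamma k^i}Q(z^{k^i})G(z^{k^i})$ gives
\[
\sum_{i=0}^d a_i(z)\,z^{\gamma k^i}\,Q(z^{k^i})\,G(z^{k^i})=0 .
\]
So the new coefficients are $\widetilde q_i(z):=a_i(z)z^{\gamma k^i}Q(z^{k^i})$; clearing the common denominator of the $\widetilde q_i$ and dividing by $\gcd$ produces honest polynomials $q_i(z)$. The heart of the argument is the computation of $\nu_\zeta(q_0(z))$ for $\zeta$ a root of unity: I would show that the factor $z^{\gamma}Q(z)$ contributes exactly enough zeros at each $\zeta\in A$ to cancel the $\nu_\zeta(a_0(z))$ that $a_0$ carries there, using that $Q(\zeta)=\prod_j(1-\zeta^{k^j}\overline\zeta^{\,k^N})^{\nu}$ vanishes precisely when $\zeta^{k^j}\overline\zeta^{\,k^N}=1$, i.e. $\zeta^{k^j}=\zeta^{k^N}$, which by the choice of $N$ (making the orbit of $\zeta$ "stable") happens for exactly one $j\in\{0,\dots,N-1\}$ and to the right order $\nu_\zeta(a_0)$. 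Simultaneously I must check this same $Q$ does \emph{not} over-cancel or create new bad zeros: any zero of $Q(z^{k^i})$ at a root of unity $\zeta$ corresponds to $\zeta^{k^{i+j}}=\zeta^{k^N}$, which by construction forces $\zeta$ into an orbit on which $\zeta^{k^M}=\zeta$ for a suitable $M$, so it is not a "bad" zero. Thus after cancellation the only root-of-unity zeros surviving in $q_0(z)$ are those $\zeta$ with $\zeta^{k^M}=\zeta$, as required. Finally, to arrange $q_0(0)\ne0$, note $a_0(0)$ may vanish to some order $e$ and $Q(z^{k^i})$ is a unit at $0$; after clearing denominators the order of vanishing of $q_0$ at $0$ is determined by $a_0$ and the $\gcd$, and I choose $\gamma$ to be exactly the integer that makes $z^{\gamma}$, together with the common-factor-clearing, leave $q_0(0)\ne0$ — concretely $\gamma$ equals the $z$-adic valuation of $a_0(z)$ minus that of the $\gcd$, which one checks is nonnegative.

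The main obstacle, and the step I would spend the most care on, is the orbit bookkeeping: proving that a single $N$ (depending only on $a_0$) simultaneously works for all $\zeta\in A$, and that the double product over $j$ cancels each $\nu_\zeta(a_0)$ \emph{exactly once} without introducing fresh root-of-unity zeros whose order is coprime to $k$. This is where the hypothesis "$\zeta^{k^M}\ne\zeta$ for all $M$" is used: it guarantees the dynamics of $z\mapsto z^k$ on these $\zeta$ is a genuine permutation with no fixed point, so $\overline\zeta^{\,k^N}$ can be pinned down, whereas the zeros we are \emph{allowed} to keep are exactly the eventual fixed points. Everything else — the ring structure giving $1/Q$ regular, clearing denominators, choosing $\gamma$ — is routine once this combinatorial core is in place.
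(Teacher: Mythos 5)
Your setup (the set $A$, the stabilising exponent $N$ with $\zeta^{k^{2N}}=\zeta^{k^N}$ for all $\zeta\in A$, the polynomial $Q(z)$, and the $k$-regularity of $1/Q(z)$, for which your ring-theoretic argument via periodicity of $c^n$ for roots of unity $c$ is a legitimate alternative to the paper's appeal to Becker's theorem) matches the intended construction. But the step you call the heart of the argument is both incorrect as stated and, even if repaired, not the mechanism that removes the bad zeros. After substituting $F(z^{k^i})=z^{\gamma k^i}Q(z^{k^i})G(z^{k^i})$, the coefficient of $G(z)$ is $a_0(z)z^{\gamma}Q(z)$; a zero of $Q$ at $\zeta\in A$ would only \emph{increase} the order of vanishing there, so it cannot ``cancel'' $\nu_\zeta(a_0)$. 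Moreover your claimed computation $\nu_\zeta(Q)=\nu_\zeta(a_0)$ for $\zeta\in A$ is generally false: for $k=2$ and $\zeta$ a primitive sixth root of unity with $\nu_\zeta(a_0)=1$ one gets $N=2$, $\overline{\zeta}^{\,4}=\zeta^2$, and $Q(\zeta)=(1-\zeta^3)(1-\zeta^4)\neq 0$. What actually makes the construction work is the identity
$$\frac{Q(z^k)}{Q(z)}=\prod_{\zeta\in A}\left(\frac{1-z^{k^N}\overline{\zeta}^{\,k^N}}{1-z\,\overline{\zeta}^{\,k^N}}\right)^{\nu_\zeta(a_0)}=P(z)h(z),$$
where $P(z):=\prod_{\zeta\in A}(1-z\overline{\zeta})^{\nu_\zeta(a_0)}$ is exactly the ``bad part'' of $a_0(z)$ and $h(z)$ is a polynomial; the divisibility of the left side by $P(z)$ uses precisely that $\overline{\zeta}\neq\overline{\zeta}^{\,k^N}$ for $\zeta\in A$. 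Telescoping gives $P(z)Q(z)\mid Q(z^{k^i})$ for every $i\geqslant 1$, so \emph{every} coefficient $a_i(z)z^{\gamma k^i}Q(z^{k^i})$ with $i\geqslant 1$ is divisible by $z^{2\gamma}P(z)Q(z)$, as is the $i=0$ coefficient $a_0(z)z^{\gamma}Q(z)=cz^{2\gamma}a(z)P(z)Q(z)$ (writing $a_0(z)=cz^{\gamma}a(z)P(z)$ with $a(\zeta)\neq0$ for $\zeta\in A$ and $a(0)=1$). Dividing the whole equation by $z^{2\gamma}P(z)Q(z)$ yields $q_0(z)=c\,a(z)$, which has $q_0(0)\neq 0$ and the required root property. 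Without this divisibility your ``divide by the gcd'' step has no content: you give no reason why the gcd of the new coefficients should contain $P(z)$, which is exactly what the conclusion about $q_0$ demands.

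Two smaller points. Your $\gamma$ should simply be $\nu_0(a_0(z))$, which makes the $z$-adic valuations of the $i=0$ coefficient and of the common factor $z^{2\gamma}P(z)Q(z)$ both equal to $2\gamma$ and gives $q_0(0)=c\neq 0$ directly; your closing formula for $\gamma$ is not the right one. And your verification that $Q(z^{k^i})$ ``does not create new bad zeros'' concerns the coefficients $q_i$ for $i\geqslant 1$, on which the lemma imposes no condition; only $q_0$ matters.
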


The proof of Lemma \ref{lem:norou} requires the following characterisation of Mahler functions due to Dumas \cite[Theorem 31, p.~153]{Dumasthese}; see also Coons and Spiegelhofer \cite{CS2018} for a proof of Dumas's result in English.

\begin{theorem}[Structure Theorem of Dumas]\label{lem:DAB} A $k$-Mahler function is the quotient of a series and an infinite product which are $k$-regular. That is, if $F(z)$ is the solution of the Mahler functional equation $$a_0(z)F(z)+a_1(z)F(z^k)+\cdots+a_d(z)F(z^{k^d})=0,$$ where $a_0(z)a_d(z)\neq 0$, the $a_i(z)$ are polynomials, then there exists a $k$-regular series $J(z)$ such that $$F(z)=\frac{J(z)}{\prod_{j\geqslant 0}\Gamma(z^{k^j})},$$ where $a_0(z)=\rho z^{\delta}\Gamma(z)$, with $\rho\neq 0$ and $\Gamma(0)=1$.
\end{theorem}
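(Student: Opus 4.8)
The plan is to realise $F$ as $J/P$, where $P(z):=\prod_{j\geqslant 0}\Gamma(z^{k^j})$ is exactly the infinite product in the statement, and then to prove separately that $P$ and $J:=P\cdot F$ are both $k$-regular. Since $\Gamma(0)=1$, the product defining $P$ converges in the $z$-adic topology to a power series with $P(0)=1$, and a reindexing of the product gives the degree-one functional equation $P(z)=\Gamma(z)\,P(z^k)$. Written as $P(z)-\Gamma(z)P(z^k)=0$, this has leading coefficient $1$, so Becker's theorem \cite[Theorem 2]{B1994} immediately yields that $P$ is $k$-regular. (For instance, $\Gamma(z)=1-z$ recovers the $2$-regularity of $T(z)=\prod_{j\geqslant 0}(1-z^{2^j})$.)

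First I would derive the functional equation for $J$. Substituting $F(z^{k^i})=J(z^{k^i})/P(z^{k^i})$ into the Mahler equation, multiplying by $P(z)$, and using the telescoping identity $P(z)/P(z^{k^i})=\prod_{j=0}^{i-1}\Gamma(z^{k^j})$, one obtains $$\sum_{i=0}^d a_i(z)\Big(\prod_{j=0}^{i-1}\Gamma(z^{k^j})\Big)J(z^{k^i})=0.$$ Every summand carries the common factor $\Gamma(z)$: for $i\geqslant 1$ it is the $j=0$ factor of the product, and for $i=0$ it is present because $a_0(z)=\rho z^\delta\Gamma(z)$. Dividing through by $\Gamma(z)$ leaves a genuine Mahler-type equation for $J$, of the same degree $d$, whose leading coefficient is the monomial $\rho z^\delta$ and whose remaining coefficients are the polynomials $a_i(z)\prod_{j=1}^{i-1}\Gamma(z^{k^j})$. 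Because $P(0)=1$, the series $J=PF$ has the same order at the origin as $F$, so it is a power series (after extracting a power of $z$ if $F$ has a pole at $0$).

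The crux, and what I expect to be the main obstacle, is the final step: a power series $J$ satisfying a Mahler-type equation $\rho z^\delta J(z)+\sum_{i=1}^d b_i(z)J(z^{k^i})=0$ with monomial leading coefficient is $k$-regular. When $\delta=0$ this is precisely Becker's theorem, so all the difficulty is in absorbing the factor $z^\delta$. I would verify the finite-dimensionality criterion of Lemma \ref{cartvec} directly. Writing $H_i:=J(z^{k^i})$ for $0\leqslant i\leqslant d-1$, the aim is to show that every iterated section $\Lambda_{r_n}\cdots\Lambda_{r_1}(J)$ lies in the fixed space $z^{-N}\sum_{i=0}^{d-1}\mathbb{C}[z]_{\leqslant D}\,H_i$ for uniform constants $N$ and $D$, which is finite-dimensional over $\mathbb{C}$. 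Two mechanisms drive this. For a scale $i\geqslant 1$, Lemma \ref{ch6:Cartier}(a) gives $\Lambda_r(p(z)H_i)=\Lambda_r(p)(z)\,H_{i-1}$, which lowers the scale and roughly divides the degree of $p$ by $k$. For the scale-$0$ term, the equation lets one replace $z^\delta J$ by $-\sum_{i\geqslant 1}b_i(z)H_i$, so only the degree-$<\delta$ part of a polynomial multiple of $J$ is irreducible; sectioning such a low-degree multiple reintroduces merely a bounded power of $z$ together with a section of $J$, via the identity $\Lambda_s(z^{-e}G)=z^{-\lfloor(s+e)/k\rfloor}\Lambda_{(s+e)\bmod k}(G)$ valid when $z^e$ divides $G$.

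The technical heart is then to check that, under sectioning, both the degree bound $D$ and the negative-power budget $N$ evolve by a contracting affine map of the shape $x\mapsto\lfloor x/k\rfloor+C$ with $C$ fixed, so that they stabilise at a uniform value rather than growing with the word length. Granting these uniform bounds, every section sits inside the single finite-dimensional space above, Lemma \ref{cartvec} applies, and $J$ is $k$-regular; together with the $k$-regularity of $P$ this gives $F=J/P$ in the asserted form. I would also note that this outcome is consistent with Proposition \ref{prop0}: the only zero of the leading coefficient $\rho z^\delta$ is $z=0$, which is not a root of unity, so there is no obstruction to the regularity of $J$.
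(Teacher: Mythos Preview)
The paper does not prove this statement: it is quoted as Dumas's Structure Theorem and attributed to \cite[Theorem~31, p.~153]{Dumasthese}, with a pointer to \cite{CS2018} for an English-language proof, and is then used as a black box in the proof of Lemma~\ref{lem:norou}. So there is no in-paper argument to compare against.

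That said, your outline is correct and is essentially the standard argument. The derivation of the Mahler equation for $J=PF$ with leading coefficient $\rho z^\delta$ via the telescoping identity $P(z)/P(z^{k^i})=\prod_{j=0}^{i-1}\Gamma(z^{k^j})$ and extraction of the common factor $\Gamma(z)$ is exactly right, and the $k$-regularity of $P$ follows from \cite[Theorem~2]{B1994} as you say. For the crux---$k$-regularity of a power series whose Mahler equation has a monomial leading coefficient---the contracting-map heuristic can be made rigorous most cleanly by extending the Cartier operators to Laurent series and using $J(z)=-\rho^{-1}z^{-\delta}\sum_{i=1}^d b_i(z)H_{i-1}(z^k)$ directly, without the low/high-degree split you describe: one section then takes $p_i(z)H_i$ (for $i\geqslant 1$) to $\Lambda_r(p_i)H_{i-1}$ and the $H_0$-contribution $p_0(z)J(z)$ to $-\rho^{-1}\sum_{i=1}^d\Lambda_r(z^{-\delta}p_0 b_i)H_{i-1}$, so the pole-order and degree bounds on the Laurent-polynomial coefficients evolve by $N\mapsto\lfloor(N+\delta+k-1)/k\rfloor$ and $D\mapsto\lfloor(D+\max_i\deg b_i)/k\rfloor$. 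Both maps stabilise since $k\geqslant 2$, which pins every $\Lambda_w(J)$ into a fixed finite-dimensional $\mathbb{C}$-space $\sum_{i=0}^{d-1}\mathcal{L}_{N^*,D^*}H_i$, and Lemma~\ref{cartvec} finishes. The one caveat you already note is genuine: if $F$ is a proper Laurent series then $J=PF$ is not a power series, so the theorem as stated implicitly requires $F\in\mathbb{C}[[z]]$; in the paper it is only ever invoked in that setting.
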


\begin{proof}[Proof of Lemma \ref{lem:norou}] Suppose that $F(z)$ is a $k$-Mahler power series of degree $d$ satisfying \eqref{MFE}. Let $A$ be the set of roots of unity $\zeta$ such that $a_0(\zeta)=0$ and there does not exist $M\geqslant 1$ such that $\zeta^{k^M}=\zeta$ and set $\nu_\zeta(a_0):=\nu_\zeta(a_0(z))$. For each $\zeta\in A$, the sequence $\{\zeta^{k^i}\}_{i\geqslant 0}$ is eventually periodic, so that there is an $M_\zeta$ such that $\zeta^{k^{2M_\zeta}}=\zeta^{k^{M_\zeta}}$. Note that this then implies that $\zeta^{k^{pM_\zeta}}=\zeta^{k^{M_\zeta}}$ for all $p\geqslant 1$. Now, set $$N:=\prod_{\zeta\in A}M_\zeta,$$ so that $\zeta^{k^{2N}}=\zeta^{k^N}$ for all $\zeta\in A$. Define the polynomial $Q(z)$ by $$Q(z):=\prod_{\zeta\in A}\prod_{j=0}^{N-1}(1-z^{k^j}\overline{\zeta}^{k^N})^{\nu_\zeta(a_0)}.$$

Then $$\frac{Q(z^k)}{Q(z)}=\prod_{\zeta\in A}\left(\frac{1-z^{k^N}\overline{\zeta}^{k^N} }{1-z\overline{\zeta}^{k^N} }\right)^{\nu_\zeta(a_0)}\in\mathbb{C}[z],$$ since for each $\zeta\in A$, $$1- z^{k^N}\overline{\zeta}^{k^N}=1-(z\overline{\zeta}^{k^N})^{k^N}=(1-z\overline{\zeta}^{k^N} )(1+(z\overline{\zeta}^{k^N})+\cdots+(z\overline{\zeta}^{k^N} )^{k^N-1}).$$ But also for each $\xi\in A$, \begin{align*}\frac{Q(z^k)}{Q(z)}(1-& z\overline{\xi}^{k^N} )^{\nu_\xi(a_0)}\\
&=\left(1-(z\overline{\xi} )^{k^N}\right)^{\nu_\xi(a_0)}\prod_{\zeta\in A\setminus\{\xi\}}\left(\frac{1-z^{k^N}\overline{\zeta}^{k^N} }{1-z\overline{\zeta}^{k^N} }\right)^{\nu_\zeta(a_0)}\\
&=(1-z\overline{\xi} )^{\nu_\xi(a_0)}\left(\sum_{j=0}^{k^N-1}(z\overline{\xi} )^j\right)^{\nu_\xi(a_0)}\prod_{\zeta\in A\setminus\{\xi\}}\left(\frac{1-z^{k^N}\overline{\zeta}^{k^N} }{1-z\overline{\zeta}^{k^N} }\right)^{\nu_\zeta(a_0)}
\end{align*} is a polynomial. Since $\overline{\xi}\neq\overline{\xi}^{k^N}$, we have that $(1-z\overline{\xi})^{\nu_\xi(a_0)}$ divides the polynomial $Q(z^k)/Q(z)$. As this is true for all $\xi\in A$, there is a polynomial $h(z)$ such that \begin{equation}\label{QoverQpoly}\frac{Q(z^k)}{Q(z)}=\left(\prod_{\zeta\in A}(1-z\overline{\zeta})^{\nu_\zeta(a_0)}\right)\cdot h(z).\end{equation} Set $P(z):=\prod_{\zeta\in A}(1-z\overline{\zeta})^{\nu_\zeta(a_0)}.$ Then \eqref{QoverQpoly} shows that the polynomial $Q(z)$ satisfies \begin{equation}\label{Qzh}Q(z)=\left(\prod_{j\geqslant 0} P(z^{k^j})\right)^{-1}\left(\prod_{j\geqslant 0} h(z^{k^j})\right)^{-1}.\end{equation}
We factor $a_0(z)=cz^\gamma\Gamma(z)=cz^\gamma a(z)P(z)$, where $a(\zeta)\neq 0$ for every $\zeta\in A$ and $a(0)=1$.  By Proposition \ref{prop0}, since $F(z)$ is $k$-regular, $a_0(1)\neq 0$. Then using Theorem~\ref{lem:DAB} and \eqref{Qzh}, there is a $k$-regular function $J(z)$ such that  $$F(z)=\frac{J(z)}{\prod_{j\geqslant 0}\Gamma(z^{k^j})}=\frac{J(z)\cdot Q(z)\cdot\prod_{j\geqslant 0} h(z^{k^j})}{\prod_{j\geqslant 0}a(z^{k^j})}.$$

Setting $H(z):=J(z)\prod_{j\geqslant 0} h(z^{k^j})$, we have that $$G(z):=\frac{F(z)}{z^{\gamma} Q(z)}=\frac{H(z)}{z^{\gamma} \prod_{j\geqslant 0}a(z^{k^j})},$$ where $1/Q(z)$ is $k$-regular by an above-mentioned result of Becker \cite[Theorem 2]{B1994} and \eqref{Qzh}. To build the functional equation for $G(z)$, we start with the functional equation \eqref{MFE} for $F(z)$ of degree $d$, and divide by $z^{2\gamma}Q(z)P(z)$ to get \begin{equation}\label{FPQ} c\cdot a(z) \frac{F(z)}{z^\gamma Q(z)}+\sum_{i=1}^d \frac{a_i(z)F(z^{k^i})}{z^{2\gamma}Q(z)P(z)}=0.\end{equation} These coefficients, for $i=1,\ldots,d$, satisfy $$\frac{a_i(z)}{z^{2\gamma}Q(z)P(z)}=\frac{z^{\gamma(k^i-2)}a_i(z)Q(z^{k^i})}{z^{k^i\gamma}Q(z^{k^i})Q(z)P(z)}=\frac{z^{\gamma(k^i-2)}a_i(z)}{z^{k^i\gamma}Q(z^{k^i})}\cdot \frac{Q(z^{k})}{Q(z)P(z)}\prod_{j=2}^i\frac{Q(z^{k^j})}{Q(z^{k^{j-1}})},$$ where as usual, the empty product is taken to be equal to $1$. By \eqref{QoverQpoly}, we have $Q(z^{k^j})/Q(z^{k^{j-1}}) =P(z^{k^{j-1}})h(z^{k^{j-1}}),$ so continuing the above equality gives \begin{equation}\label{qia}\frac{a_i(z)}{z^{2\gamma}Q(z)P(z)}= \frac{z^{\gamma(k^i-2)}a_i(z)}{z^{k^i\gamma}Q(z^{k^i})}\cdot h(z)\prod_{j=2}^i\left(P(z^{k^{j-1}})h(z^{k^{j-1}})\right)=\frac{q_i(z)}{z^{k^i\gamma}Q(z^{k^i})},\end{equation} where $q_i(z)$ is the polynomial $$q_i(z):=z^{\gamma(k^i-2)}a_i(z)h(z)\prod_{j=2}^i\left(P(z^{k^{j-1}})h(z^{k^{j-1}})\right).$$ Finally, defining $q_0(z):=c\cdot a(z)$, substituting the result of \eqref{qia} into \eqref{FPQ} and using the definition of $G(z)$, we have that $G(z)$ satisfies the functional equation $$q_0(z)G(z)+q_1(z)G(z^k)+\cdots+q_d(z)G(z^{k^d})=0.$$ Here $G(z)$ inherits the degree $d$ from $F(z)$, $q_0(0)c\cdot a(0)=c\neq 0$ and $q_0(z)$ inherits the desired root properties from $a(z)$. This finishes the proof of the lemma.
\end{proof}

Our method of proof of Lemma \ref{lem:norou} is inspired by remarks of Becker \cite[p.~279]{B1994} as well as an argument of Adamczewski and Bell \cite[Proposition 7.2]{ABkl}.

%%%%%%%%%%%%%%%%%%%%%%%%%%%%%
\section{Proof of the main result}\label{proof}
%%%%%%%%%%%%%%%%%%%%%%%%%%%%%

\begin{proof}[Proof of Theorem \ref{main}] Suppose that $F(z)$ is a $k$-regular function satisfying \eqref{MFE} of degree $d$. By Lemma \ref{lem:norou}, there exist a polynomial $Q(z)$ with $Q(0)=1$ such that $1/Q(z)$ is $k$-regular and a nonnegative integer $\gamma$ such that the $k$-Mahler function $G(z):=z^{-\gamma} F(z)/Q(z)$ satisfies a Mahler functional equation \begin{equation}\label{q0G}q_0(z)G(z)+q_1(z)G(z^k)+\cdots+q_d(z)G(z^{k^d})=0\end{equation} of minimal degree $d$, $q_i(z)\in\mathbb{C}[z]$, and $q_0(z)$ has the property that $q_0(0)\neq 0$ and if $q_0(\zeta)=0$ with $\zeta$ a root of unity then there is some $M\geqslant 1$ such that $\zeta^{k^M}=\zeta$.

We let $\mathfrak{I}$ be the ideal of polynomials $p(z)$ such that $$p(z)G(z) \in \sum_{j\geqslant 1} \mathbb{C}[z]G(z^{k^j}).$$ Since $G$ is $k$-Mahler, $\mathfrak{I}$ is nonzero, and we let $q(z)$ be a generator for $\mathfrak{I}$ whose leading coefficient is $1$.  Since $q_0(z)\in \mathfrak{I}$, we have $q(z)$ divides $q_0(z)$ and so we have $q(0)\neq 0$. By Corollary \ref{zetaN=zeta}, if $\zeta$ is a zero of $q(z)$, then there does not exist $M\geqslant 1$ such that $\zeta^{k^M}=\zeta$. Thus if $\zeta$ is a root of $q(z)$ then $\zeta$ cannot be a root of unity, since we have shown that any zero of $q_0(z)$ that is a root of unity must satisfy $\zeta^{k^M}=\zeta$ for some $M\geqslant 1$, and we have also shown that each zero $\zeta$ of $q_0(z)$ that is a root of unity has the property that there is no $M\geqslant 1$ such that $\zeta^{k^M}=\zeta$. Hence $q(z)$ has no zeros that are either zero or a root of unity. Thus $G(z)$ has the property that there is a relation $$q(z)G(z) \in \sum_{j\geqslant 1} \mathbb{C}[z]G(z^{k^j})$$ with $q(z)$ having no zeros that are roots of unity and with $q(0)\neq 0$.

We now claim that $q(z)=1$. To see this, suppose that $q(z)$ is non-constant.  Then there is a nonzero complex number $\lambda$ that is not a root of unity such that $q(\lambda)=0$.  Since $G(z)$ is $k$-regular, the $\mathbb{C}$-vector space spanned by all elements of the form $\Lambda_{r_m}\cdots \Lambda_{r_0}(G)(z)$ (including also $G(z)$) is finite-dimensional.  Moreover, its basis elements are of the form $\sum_{i=1}^d h_i(z) G(z^{k^{i-1}}),$ for some rational functions $h_i(z)$, where $d$ is the degree of the Mahler function $G$. Since, as we run over a basis, only finitely many rational functions $h_i(z)$ occur and since the functions $F(z),\ldots,F(z^{k^{d-1}})$ are linearly independent over $\mathbb{C}(z)$, there is a nonzero polynomial $h(z)$ such that $$V\subseteq h(z)^{-1}\sum_{i=1}^d \mathbb{C}[z]G(z^{k^{i-1}}),$$ where $V$ is the $\mathbb{C}$-vector space defined in Lemma \ref{cartvec}. Now since $\lambda$ is nonzero and is not a root of unity, there exists some positive integer $N$ such that $\lambda^{k^N}$ is not a zero of $h(z)$.
Repeatedly using the Mahler Equation \eqref{q0G}, we obtain a relation of the form $$Q(z) G(z) = \sum_{j=1}^{d} Q_j(z) G(z^{k^{N+j-1}})$$ with $Q(z), Q_1(z),\ldots ,Q_d(z)$ polynomials and $Q(z)\neq0$ and $\gcd(Q(z),Q_1(z),\ldots,$ $Q_d(z))=1$.  Since $Q(z)\in \mathfrak{I}$, we see that $q(z)$ divides $Q(z)$ and so $\lambda$ is a root of $Q(z)$.

Now we write $$G(z)  = \sum_{j=1}^d R_j(z) G(z^{k^{N+j-1}}),$$ with $$R_j(z):=Q_j(z)/Q(z).$$ Moreover, since $\gcd(Q(z),Q_1(z),\ldots, Q_d(z))=1$, we have $\nu_{\lambda}(R_{\ell}(z))<0$ for some $\ell\in \{1,\ldots ,d\}$.
By Lemma \ref{Kislemma8}, there exists $(r_1,\ldots ,r_N)\in \{0,1,\ldots ,k-1\}^N$ such that
$$\nu_{\lambda}(\Lambda_{r_N} \cdots  \Lambda_{r_1}(R_{\ell})(z^{k^N}))\leqslant \nu_{\lambda}(R_{\ell}(z)) < 0.$$
Thus
$$\nu_{\lambda^{k^n}} (\Lambda_{r_N} \cdots \Lambda_{r_1}(R_{\ell})(z)) < 0.$$  Now set
$$T_j(z) :=\Lambda_{r_N} \cdots  \Lambda_{r_1}(R_j)(z)\quad \mbox{for}\quad j=1,\ldots ,d.$$
Then
$\Lambda_{r_N} \cdots  \Lambda_{r_1}(G)(z)\in V$ and so
$$\sum_{j=1}^d T_j(z) G(z^{k^{j-1}}) \in V.$$  Since $G(z),\ldots ,G(z^{k^{d-1}})$ are linearly independent over $\mathbb{C}(z)$ we must have that $h(z)T_j(z) \in \mathbb{C}[z]$ for $j=1,\ldots, d$.  But $\nu_{\lambda^{k^N}}(h(z))=0$ and so $\nu_{\lambda^{k^N}}(h(z)T_{\ell}(z))<0$, which contradicts the fact that $h(z)T_j(z)$ must be a polynomial, giving the claim.  It follows that $q(z)=1$.

Specifically, $1\in \mathfrak{I}$ and so $$G(z)\in \sum_{j\ge 1} \mathbb{C}[z]G(z^{k^j}),$$ which says that $G(z)$ satisfies a Mahler functional equation of the form \eqref{MFE} with $a_0(z)=1$.
This finishes the proof of Theorem \ref{main}.
\end{proof}

%%%%%%%%%%%%%%%%%%%%%%%%%%%%%
\section{Optimality of the Theorem \ref{main}}\label{further}
%%%%%%%%%%%%%%%%%%%%%%%%%%%%%

The careful reader will notice that, while we prove Becker's conjecture completely, the resulting function $F(z)/z^\gamma Q(z)$ that satisfies a Mahler-type functional equation \eqref{MFE} with $a_0(z)=1$ is not necessarily a power series, so that (strictly speaking) it is neither $k$-regular nor $k$-Becker. One may argue, that probably the field of Laurent series is a preferable setting for solutions to \eqref{MFE}, and indeed a result of Dumas's \cite[Th\'eor\`eme 7]{Dumasthese} gives reasonable bounds on the valuation at $z=0$ of the solutions.

\begin{theorem}[Dumas] Let $F(z)$ be a Laurent power series solution to a Mahler-type functional equation \eqref{MFE} of degree $d$.  Then $F(z)\in z^{-\nu}\mathbb{C}[[z]]$, where $$\nu:=\left\lceil\max\left\{\frac{\nu_0(a_d(z))}{k^d},\frac{\nu_0(a_d(z)/a_0(z))}{(k^d-1)}\right\}\right\rceil.$$
\end{theorem}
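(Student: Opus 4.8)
The plan is to argue by contradiction at $z=0$, comparing the orders of vanishing at the origin of the individual terms of \eqref{MFE}. Since the number $\nu$ in the statement is non‑negative (one of the two quantities it maximises, $\nu_0(a_d(z))/k^d$, already is), there is nothing to prove when $F(z)\in\mathbb{C}[[z]]$; so I would assume $F(z)$ has a genuine pole at $0$, write $\nu_0(F)=-s$ with $s\geqslant 1$ and $F(z)=\sum_{n\geqslant -s}c_n z^n$ with $c_{-s}\neq 0$, and reduce the assertion to $s\leqslant\nu$. Writing $a_{i,j}$ for the coefficient of $z^j$ in $a_i(z)$, each nonzero term $a_i(z)F(z^{k^i})$ of \eqref{MFE} has valuation $\nu_0(a_i)-k^is$ at $z=0$, with lowest‑degree coefficient $a_{i,\nu_0(a_i)}\,c_{-s}\neq 0$.

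The first, coarse step is a straight order comparison. Rewrite \eqref{MFE} as $a_d(z)F(z^{k^d})=-\sum_{i=0}^{d-1}a_i(z)F(z^{k^i})$. If the left‑hand side has no pole at $0$, then $k^ds\leqslant\nu_0(a_d)$, so $s\leqslant\nu_0(a_d)/k^d\leqslant\nu$ and we are done; otherwise its pole order, $k^ds-\nu_0(a_d)$, is at most $\max_{0\leqslant i\leqslant d-1}\bigl(k^is-\nu_0(a_i)\bigr)$, and choosing an index $i<d$ that realises the maximum gives $(k^d-k^i)s\leqslant\nu_0(a_d)-\nu_0(a_i)$. The choice $i=0$ yields exactly $s\leqslant\nu_0(a_d/a_0)/(k^d-1)$, one of the two terms defining $\nu$, while any $i$ with $1\leqslant i\leqslant d-1$ yields $s\leqslant\nu_0(a_d)/(k^d-k^i)$. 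This already proves the theorem with a slightly weaker constant, and settles the degree‑one case exactly (there the only available index is $i=0$, so $s\leqslant\max\bigl(\nu_0(a_d)/k^d,\ \nu_0(a_d/a_0)/(k^d-1)\bigr)$, and integrality of $s$ lets one take the ceiling).

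The real obstacle is to sharpen the contribution of the intermediate indices $0<i<d$ from $\nu_0(a_d)/(k^d-k^{d-1})$ down to $\nu_0(a_d)/k^d$; a comparison of pole orders alone cannot do this, since it ignores two features of the Laurent setting, namely that $F(z^{k^i})$ is supported only on exponents in $k^i\mathbb{Z}$ and that $F(z)$ has only finitely many terms of negative degree. I would therefore descend to the coefficient level: extracting the coefficient of $z^N$ from \eqref{MFE} for $N$ just below the threshold $\min_{0\leqslant i\leqslant d-1}\bigl(\nu_0(a_i)-k^is\bigr)$ forces every term $a_i(z)F(z^{k^i})$ with $i<d$ to contribute nothing at degree $N$ and isolates the single monomial $a_{d,\nu_0(a_d)}\,c_{-s}$, forcing $c_{-s}=0$—a contradiction—unless $s$ is small enough that the choice $N=\nu_0(a_d)-k^ds$ fails to clear that threshold, which is precisely the inequality obtained above. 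When that inequality still permits a tie at an intermediate index, I expect one must run a downward induction on the coefficient index in which the support conditions on the sets $k^i\mathbb{Z}$ propagate the forced vanishings (the mechanism is already visible in the toy equation $F(z)+F(z^2)-z^4F(z^4)=0$, whose right‑hand side is supported on $2\mathbb{Z}$, which by itself shows the equation has no Laurent solution with a pole); carrying this bookkeeping through is the technical crux. Granting it, integrality of $s$ lets one pass to the ceiling and conclude $s\leqslant\nu$, that is, $F(z)\in z^{-\nu}\mathbb{C}[[z]]$.
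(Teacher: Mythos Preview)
The paper does not prove this theorem; it is quoted from Dumas's thesis \cite[Th\'eor\`eme~7]{Dumasthese} as background in Section~\ref{further}, with no argument supplied, so there is no in-paper proof to compare your proposal against.

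On the proposal itself: your first step, the straight valuation comparison at $z=0$, is correct and does settle the case $d=1$ exactly and the general case with the weaker constant $\nu_0(a_d)/(k^d-k^{d-1})$ in place of $\nu_0(a_d)/k^d$. But the proof is, by your own admission, incomplete. The second step---sharpening the intermediate-index contribution---is only described in outline: you point to the support of $F(z^{k^i})$ on $k^i\mathbb{Z}$ and to the finiteness of the polar part, give a toy example where the mechanism is visible, and then write ``carrying this bookkeeping through is the technical crux. Granting it, \ldots''. That is an honest acknowledgement of a gap, not a proof. The idea you sketch is plausible (and your example $a_0=1$, $a_1=1$, $a_2=\pm z^m$ with $k=2$ shows that one really must go below the level of valuations, since the crude bound and the claimed bound can differ), but until the coefficient-level induction is actually written out and shown to terminate with the stated inequality, what you have is a partial argument together with a programme for finishing it.
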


\noindent In this section, we show, by giving an example, that a stronger variant of Becker's conjecture with the added conclusion that the resulting function $F(z)/R(z)$ is a power series cannot hold; that is, with the currently in-use definitions, such a function is not necessarily $k$-Becker.  We now state this result.
\begin{theorem} Let $k\geqslant 2$ be a natural number.  Then there exists a $k$-regular power series $F(z)$ such that there is no nonzero rational function $R(z)$ with the property that $F(z)R(z)$ is a $k$-Becker power series.
\label{thm: example}
\end{theorem}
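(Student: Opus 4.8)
The plan is to exhibit an explicit $k$-regular power series $F(z)$ whose minimal Mahler equation has an $a_0(z)$ forced to vanish at a point that no rational normalization can repair while keeping the result a power series. The natural candidate is built from the ``$T(z)^{-1}$-type'' obstruction already visible in the introduction: take $F(z)$ to be a $k$-regular series such that $F(z) = c z^{a} F(z^k) + (\text{lower-order in the Mahler filtration})$ with $c\neq 0$ and $a\geq 1$, so that in any functional equation $a_0(z)F(z)+\cdots+a_d(z)F(z^{k^d})=0$ with $\gcd(a_0,\ldots,a_d)=1$, the polynomial $a_0(z)$ must have $\nu_0(a_0) \geq 1$, i.e. $a_0(0)=0$. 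Concretely one can take $F(z) = \sum_{n\geq 0} f(n) z^n$ where $f$ is a $k$-regular sequence chosen so that $F(z)$ itself satisfies $F(z) = (1+z+\cdots+z^{k-1})F(z^k) + P(z)$ for some polynomial $P$ making $F$ a genuine power series with $f(0)\neq 0$ but $F(z)-F(z^k)$ divisible by $z$ in a controlled way; even simpler, one can try $F(z)=1/\prod_{j\geq 0}(1-z^{k^j})$, which is $k$-Mahler of degree one via $(1-z)F(z)=F(z^k)$ but is \emph{not} $k$-regular --- so that exact choice fails, and instead I would pick a $k$-regular $F$ that is a numerator-type series of Dumas's Structure Theorem whose associated $\Gamma(z)$ is forced to be nontrivial in every equation.

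The key steps, in order, are: (1) produce the explicit $F(z)$ and verify it is $k$-regular, most cleanly by writing down finitely many Cartier sections $\Lambda_{r_n}\cdots\Lambda_{r_1}(F)$ and checking they span a finite-dimensional space via Lemma \ref{cartvec}; (2) suppose for contradiction that some nonzero $R(z)\in\mathbb{C}(z)$ makes $G(z):=R(z)F(z)$ a $k$-Becker power series, so $G(z)=\sum_{j=1}^{d} b_j(z) G(z^{k^j})$ with $b_j$ polynomials; (3) translate this into a Mahler equation for $F(z)$ with leading coefficient $a_0(z)$ dividing a known polynomial built from $R$ and the $b_j$, and argue by comparing valuations at $z=0$ (using $R(0)$, $b_j(0)$, and the $k$-expansion structure of $F$) that $a_0(0)=0$ is unavoidable --- hence $G$ cannot be $k$-Becker (a $k$-Becker series has $a_0(z)=1$, so $a_0(0)=1\neq 0$) \emph{unless} $R(z)$ has a pole at $0$, i.e. $R(z)=z^{-\gamma}(\cdots)$ with $\gamma\geq 1$; (4) finally rule out that escape route by showing that introducing the pole $z^{-\gamma}$ makes $G(z)$ fail to be a power series: $G(0)$ would have to be $\infty$ unless $F$ vanishes to order $\gamma$ at $0$, but our $F$ was chosen with $f(0)\neq 0$, contradiction. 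Thus no $R$ works and Theorem \ref{thm: example} follows.

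The main obstacle I anticipate is step (3): making the valuation bookkeeping tight enough that $a_0(0)=0$ genuinely cannot be avoided by a clever rational $R$ that is regular and nonzero at $0$. The danger is that multiplying $F$ by such an $R$ could conceivably ``rotate'' the obstruction away from $z=0$, so I need $F$ engineered so that the obstruction is genuinely attached to the point $z=0$ and is invariant under multiplication by any unit of $\mathbb{C}[[z]]\cap\mathbb{C}(z)$. The cleanest way to guarantee this is to choose $F$ so that $\nu_0(F(z^k)) = k\cdot\nu_0(F(z))$ grows, while the minimal equation forces $a_0(z)F(z)$ to match terms $a_j(z)F(z^{k^j})$ of strictly larger valuation at $0$; then for \emph{any} $R$ with $\nu_0(R)=0$ the series $G=RF$ has $\nu_0(G)=\nu_0(F)$ and the same valuation mismatch persists, forcing $\nu_0(b_0)\geq 1$ in any equation $b_0(z)G(z)=\sum b_j(z)G(z^{k^j})$, contradicting $b_0=1$. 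I would make this rigorous by a direct induction on the degree $d$ of the putative equation for $G$, peeling off one application of the Cartier operator $\Lambda_0$ at each stage exactly as in the proof of Proposition \ref{prop0}, and tracking $\nu_0$ rather than $\nu_\xi$. The remaining steps (1), (2), (4) are routine once the right $F$ is fixed.
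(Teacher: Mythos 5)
Your overall strategy---pin the obstruction to $z=0$, show it survives multiplication by any rational function that is a unit in $\mathbb{C}[z]_{(z)}$, and rule out poles of $R$ at $0$ because $F(0)\neq 0$---is exactly the shape of the paper's argument, and your step (4) is the same observation the paper makes at the start of its proof. But the proposal has a genuine gap at its core: you never actually produce the series $F$, and the valuation heuristic you offer in its place cannot work as stated. If $F$ is a power series with $f(0)\neq 0$, then $\nu_0\bigl(F(z^{k^j})\bigr)=0$ for every $j$, so there is no ``strictly larger valuation at $0$'' among the iterates to force $\nu_0(a_0)\geqslant 1$; a generic $k$-Becker series with $F(0)=1$ satisfies an equation with $a_0=1$ and defeats that bookkeeping outright. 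The paper resolves this tension by building $F=zF_0$ where $F_0=H+1/z$ has a genuine pole at $0$ ($H$ an explicit $k$-Becker power series with $H(0)=1$ solving $A(z)=(1-z+z^{k-1})A(z^k)-z^{k^2-k}(1-z)A(z^{k^2})$, an equation that also admits $1/z$ as a solution). The pole is what creates the leverage: substituting $F(z^{k^i})=z^{k^i}F_0(z^{k^i})$ into any putative Becker equation for $R(z)F(z)$ produces a relation $\sum_i h_i(z)F_0(z^{k^i})=0$ in which $h_i(z)/z^{k^i-1}$ is regular at $0$ for $i\geqslant 1$ while $h_0(0)\neq 0$, and the contradiction is Lemma~\ref{Xnotzero}.

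Your plan to prove the analogue of Lemma~\ref{Xnotzero} by ``peeling off $\Lambda_0$ exactly as in Proposition~\ref{prop0}, tracking $\nu_0$ rather than $\nu_\xi$'' also does not go through directly: the engine of Proposition~\ref{prop0} is the identity $\nu_\xi(c(z^k))=\nu_\xi(c(z))$ for $\xi^k=\xi$, which fails at $\xi=0$ (there $\nu_0(c(z^k))=k\,\nu_0(c(z))$). The paper's induction instead exploits the specific relation $\Lambda_0(F_0)(z)=F_0(z)-z^{k-1}F_0(z^k)$ coming from the chosen functional equation, together with a base case that is itself nontrivial: Lemma~\ref{independent}, the linear independence of $F_0(z)$ and $F_0(z^k)$ over $\mathbb{C}(z)$, proved by a degree analysis of a hypothetical relation $F_0(z^k)=a(z)F_0(z)$. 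Without an explicit $F$, the independence lemma, and the corrected induction, the argument does not close; these are the actual mathematical content of the theorem, not routine verifications.
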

We note that this does not contradict the conclusion of Theorem \ref{main}, but merely shows that one must necessarily work in the ring of Laurent power series in order to obtain the conclusion.  More precisely, the examples we give in establishing Theorem~\ref{thm: example} have the property that $F(z)/z$ is $k$-Becker with a pole at $z=0$ and so it has a Laurent power series expansion, but not an expansion in the ring of formal power series around $z=0$; moreover, one must introduce a pole at $z=0$ in order to obtain a $k$-Becker function.

Towards the goal of producing these examples, let $k$ be a natural number that is greater than or equal to two and consider the functional equation
\begin{equation}\label{Afunc} A(z) = (1-z+z^{k-1}) A(z^k) - z^{k^2-k} (1-z)A(z^{k^2}).\end{equation} Then writing $${\bf M}(z):=\left[\begin{matrix} 1-z+z^{k-1} & -z^{k^2-k}(1-z)\\ 1 & 0 \end{matrix}\right]$$ and ${\bf A}(z)=[A(z), A(z^k)]^T$, we have $${\bf A}(z)={\bf M}(z){\bf A}(z^k).$$ Let $H(z)$ be the power series solution of the functional equation \eqref{Afunc} corresponding to the iteration of the matrix ${\bf M}(z)$; that is, set $$H(z):=\lim_{n\to\infty} [1,\,0]{\bf M}(z){\bf M}(z^k)\cdots {\bf M}(z^{k^{n-1}}) [1,\,0]^T.$$ To see that this limit exists, it is enough to notice that $k^2-k\geq1$ so that $${\bf M}(z)=\left[\begin{matrix} 1+O(z) & O(z)\\ 1 & 0 \end{matrix}\right]$$ and then for any $n\geqslant 1$, \begin{align*}{\bf M}(z^{k^{n-1}})\left({\bf M}(z^{k^n})-\left[\begin{matrix} 1 & 0\\ 0 & 1 \end{matrix}\right]\right)&=\left[\begin{matrix} O(1) & O(z^{k^{n-1}})\\ 1 & 0 \end{matrix}\right]\left[\begin{matrix} O(z^{k^n}) & O(z^{k^n})\\ 1 & -1 \end{matrix}\right] \\
	&=\left[\begin{matrix} O(z^{k^{n-1}}) & O(z^{k^{n-1}})\\ O(z^{k^n}) & O(z^{k^n}) \end{matrix}\right].\end{align*}
It then follows that for any $n\geqslant 2$, the difference for consecutive terms within the limit is \begin{align*}[1,\,0]{\bf M}(z)&{\bf M}(z^k)\cdots {\bf M}(z^{k^{n-1}}){\bf M}(z^{k^{n}}) [1,\,0]^T\\
	&\qquad-[1,\,0]{\bf M}(z){\bf M}(z^k)\cdots {\bf M}(z^{k^{n-1}}) [1,\,0]^T\\
	&= [1,\,0]{\bf M}(z){\bf M}(z^k)\cdots {\bf M}(z^{k^{n-2}})\left[\begin{matrix} O(z^{k^{n-1}}) & O(z^{k^{n-1}})\\ O(z^{k^n}) & O(z^{k^n})\end{matrix}\right] [1,\,0]^T\\
	&= O(z^{k^{n-1}}).\end{align*} Here we note that $H(0)=1$. We also note that the function $H_0(z):=1/z$ is a solution to the functional equation \eqref{Afunc}.

We continue by setting
\begin{equation} \label{F0}
F_0(z) := H(z)+\frac{1}{z},\end{equation} which again satisfies \begin{equation}\label{F_1}F_0(z) = (1-z+z^{k-1}) F_0(z^k) - z^{k^2-k} (1-z)F_0(z^{k^2}).\end{equation}  As $H(z)$ is a $k$-Becker power series, it is $k$-regular, thus \begin{equation} \label{eqF} F(z):=zF_0(z)=1+zH(z)\end{equation} is $k$-regular, as the $k$-regular power series form a ring.  We note that $F_0(z)=F(z)/z$ is $k$-Becker and is a Laurent power series.  We show, however, that there does not exist a nonzero rational function $R(z)$ such that $F(z)R(z)$ is a $k$-Becker power series; that is, in order to obtain a $k$-Becker function that is a nonzero rational function multiple of $F(z)$ one must work in the ring of Laurent power series and cannot restrict one's focus to the ring of formal power series.  In order to show the desired result, we first establish two key lemmas. We note that the following lemma can also be proved using the method of Roques \cite{R2018}.

\begin{lemma}\label{independent} Let $k\geqslant 2$ and let $F_0(z)$ be as in Equation \eqref{F0}.  Then the Laurent power series $F_0(z)$ and $F_0(z^k)$ are linearly independent over $\mathbb{C}(z)$.
\end{lemma}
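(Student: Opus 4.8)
The plan is to argue by contradiction. Suppose $F_0(z)$ and $F_0(z^k)$ are linearly dependent over $\mathbb{C}(z)$, so that $F_0(z) = c(z) F_0(z^k)$ for some rational function $c(z) \in \mathbb{C}(z)$ (nonzero, since $F_0 \neq 0$). Substituting this into the degree-two functional equation \eqref{F_1} and using that $F_0(z^{k^2}) = c(z^k)^{-1} F_0(z^k)$ (assuming for the moment $c(z^k)$ is invertible — this can be verified since $F_0(z^k)\not\equiv 0$), one obtains a first-order relation expressing $c(z)$ as a rational function of $z$ explicitly:
\begin{equation*}
c(z) = (1-z+z^{k-1}) - \frac{z^{k^2-k}(1-z)}{c(z^k)}.
\end{equation*}
This gives $c(z)c(z^k) = (1-z+z^{k-1})c(z^k) - z^{k^2-k}(1-z)$. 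The strategy is to show this functional equation for $c(z)$ has no rational solution, which yields the contradiction.

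First I would pin down the behavior of $c(z)$ at $z=0$. Since $F_0(z) = H(z) + 1/z$ with $H(0)=1$, we have $\nu_0(F_0(z)) = -1$ and similarly $\nu_0(F_0(z^k)) = -k$, so $\nu_0(c(z)) = -1 - (-k) = k-1 \geq 1$; in particular $c(0)=0$. Plugging $z=0$ into the relation $c(z)c(z^k) = (1-z+z^{k-1})c(z^k) - z^{k^2-k}(1-z)$ and comparing orders: the left side has order $\nu_0(c(z)) + \nu_0(c(z^k)) = (k-1) + k(k-1) = (k-1)(k+1) = k^2-1$, while on the right, the first term $(1-z+z^{k-1})c(z^k)$ has order $k(k-1) = k^2-k$ and the second term $z^{k^2-k}(1-z)$ has order $k^2-k$. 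So the right side has order exactly $k^2-k$ (the leading terms of the two summands must be checked not to cancel, but even if they did the order would only increase, still staying $< k^2-1$ as long as it is finite — and it cannot be identically zero), whereas the left side has order $k^2-1 > k^2-k$. This is already a contradiction.

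The main obstacle I anticipate is the bookkeeping around whether $c(z^k)$ is genuinely invertible as a rational function and whether the substitution into the second-order equation is valid — i.e., ruling out that the linear dependence is ``degenerate'' in a way that makes the division illegitimate. But since $F_0(z)$ is a nonzero Laurent series, $c(z) = F_0(z)/F_0(z^k)$ is a well-defined nonzero element of $\mathbb{C}((z))$, and because both $F_0(z)$ and $F_0(z^k)$ satisfy \eqref{F_1}, one checks directly that $c(z)$ is forced to be rational (this follows from the displayed first-order relation, or alternatively from the fact that $F_0$ is $k$-Mahler of degree at most $2$ and a degree-one dependence would make it degree $1$). Once $c(z) \in \mathbb{C}(z)$ is established, the order computation at $z=0$ above is the whole argument. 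Alternatively, and perhaps more cleanly, one can observe that $\nu_0(F_0) = -1$ is not a multiple of anything forcing consistency: a relation $F_0(z) = c(z)F_0(z^k)$ with $c(z)$ rational would force $\nu_0(F_0)(k-1)$ to equal $-\nu_0(c)$, but iterating, $F_0(z) = c(z)c(z^k)\cdots c(z^{k^{n-1}}) F_0(z^{k^n})$, and matching the pole of order $1$ at $0$ against the product's order $k^n \cdot \nu_0(F_0) + \sum_{j=0}^{n-1}\nu_0(c(z^{k^j})) = -k^n + k^{n-1}\cdot\frac{\nu_0(c)}{k-1}\cdot(\text{sum})$ leads to the same numerical impossibility as $n\to\infty$. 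Either route closes the argument.
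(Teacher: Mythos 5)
There is a genuine gap, and it is fatal to the approach as written. Your entire contradiction rests on comparing orders at $z=0$ in the relation $c(z)c(z^k)=(1-z+z^{k-1})c(z^k)-z^{k^2-k}(1-z)$, claiming the left side has order $k^2-1$ while the right side has order $k^2-k$. But the two summands on the right \emph{do} cancel at leading order: since $c(z)=z^{k-1}(1+z+O(z^2))$, the first summand is $z^{k^2-k}+O(z^{k^2-k+1})$ and the second is $-z^{k^2-k}+z^{k^2-k+1}$, and carrying the expansion further shows the right side is $z^{k^2-1}+O(z^{k^2})$ --- exactly matching the left side in both order and leading coefficient. Your parenthetical remark that ``even if they did [cancel] the order would only increase, still staying $<k^2-1$'' is not justified and is in fact false. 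The underlying reason no local argument at $z=0$ can work is that $H_0(z)=1/z$ is an exact solution of \eqref{Afunc}, for which the corresponding ratio $c(z)=z^{k-1}$ \emph{is} rational and satisfies your displayed functional equation identically. So the equation for $c$ certainly has rational solutions, and the valuation of $F_0$ at $0$ (which agrees with that of $1/z$) cannot detect the difference. Your alternative ``iterate and match pole orders'' route fails for the same reason: the identity $-1=(k-1)\sum_{j=0}^{n-1}k^j-k^n$ holds for every $n$, so there is no numerical impossibility.

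To close the argument you must use more of $F_0$ than its leading singular part --- concretely, the constant term $H(0)=1$, which forces $F_0(z^k)/F_0(z)=z^{1-k}(1-z+O(z^2))$ rather than $z^{1-k}$. The paper's proof does exactly this: it writes the putative rational ratio as $z^{1-k}P(z)/Q(z)$ with $P(0)=Q(0)=1$, clears denominators to get a polynomial identity, uses divisibility and degree comparisons to force $\deg Q\leqslant 1$, and then eliminates the finitely many candidate rational functions by comparing their expansions at $z=0$ to second order against the actual expansion of $F_0(z^k)/F_0(z)$. Your setup (reducing to a first-order equation for the ratio) is a reasonable start, but without the degree-bounding step and the second-order coefficient comparison, no contradiction is obtained.
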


\begin{proof} Suppose not.  Then since $F_0(z)$ is nonzero, there is a rational function $a(z)$ such that $F_0(z^k)/F_0(z)=a(z)$.  We note that $$F_0(z)=\frac{1}{z} + 1 + O(z)$$ and so $$\frac{F_0(z^k)}{F_0(z)} = z^{1-k}( 1-z + O(z^2)).$$  It follows that there are relatively prime polynomials $P(z)$ and $Q(z)$ with $P(0)=Q(0)=1$ such that $a(z) = z^{1-k} P(z)/Q(z)$.  Then since $$F_0(z^{k^2}) = a(z)a(z^k) F_0(z),$$ Equation \eqref{F_1} gives
$$1 = (1-z+z^{k-1}) z^{1-k} \cdot\frac{P(z)}{Q(z)} - z^{k^2-k} (1-z)z^{1-k^2} \cdot\frac{P(z)P(z^k)}{Q(z)Q(z^k)}.$$
Clearing denominators, we see
\begin{equation} \label{PQ}
z^{k-1} Q(z)Q(z^k) = (1-z+z^{k-1}) P(z)Q(z^k) - (1-z)P(z)P(z^k).\end{equation}
In particular, $Q(z^k)$ divides $(1-z)P(z)P(z^k)$ and since $P(z)$ and $Q(z)$ are relatively prime, we then have that
$Q(z^k)$ divides $(1-z)P(z)$.  Similarly, $P(z)$ divides $z^{k-1} Q(z)Q(z^k)$ and since $P(0)=1$, and $P(z)$ and $Q(z)$ are relatively prime, we see that $P(z)$ divides $Q(z^k)$.  So we may write $Q(z^k)=P(z)b(z)$ with $b(z)$ dividing $(1-z)$.  Since $Q(0)=P(0)=1$, we see that $b(z)=1$ or $b(z)=(1-z)$.

Then substituting $P(z)=Q(z^k)/b(z)$ into Equation \eqref{PQ}, we find
$$z^{k-1}Q(z)b(z)b(z^k) = (1-z+z^{k-1}) Q(z^k)b(z^k) - (1-z) Q(z^{k^2}).$$
Now let $D$ denote the degree of $Q(z)$.  Then since \begin{equation}\label{Qkb}(1-z)Q(z^{k^2}) = - z^{k-1}Q(z)b(z)b(z^k) + (1-z+z^{k-1}) Q(z^k)b(z^k),\end{equation} and since $b(z)$ has degree at most $1$, we have
$$k^2D + 1 \leqslant \max\{ 2k+D, 2k-1+kD\}$$ so that \begin{equation}\label{Dkk1} D\leqslant \max\left\{ \frac{2k-1}{k^2-1},\frac{2}{k}\right\}\leqslant 1,\end{equation} since $k\geqslant 2$. Thus $D=0$ or $D=1$.

Suppose that $D=0$. Then $Q(z)$ is a constant polynomial and the condition that $Q(0)=1$ gives $Q(z)=1$. Since $P(z)$ divides $Q(z^k)$ we have that $P(z)$ is also $1$ and so $a(z)=z^{1-k}$.  But $$\frac{F_0(z^k)}{F_0(z)}=z^{1-k}(1-z+O(z^2)) \neq z^{1-k}=a(z),$$ and so we get a contradiction, thus $D=1$.

So, suppose that $D=1$. By \eqref{Dkk1} it is clear that if $k\geqslant 3$, then $D=0$, so we must have $k=2$.  If $b(z)=1$, then comparing degrees of the sides of the equality in \eqref{Qkb} gives $k^2+1=2k-1$, which is impossible since $k\geqslant 2$. Thus we must have $b(z)=1-z$. In this case, $Q(z)$ has degree one and we have $Q(z^2)=P(z)(1-z)$. Plugging in $z=1$ gives $Q(1)=0$ and since $Q(z)$ has degree $1$, we have $Q(z)=1-z$. Then $Q(z^2)=P(z)(1-z)$ gives that $P(z)=1+z$ and so $$a(z)=\frac{1}{z} \cdot\frac{1+z}{1-z} = \frac{1}{z}+2 +O(z).$$  But $$\frac{F(z^2)}{F(z)}=\frac{1}{z}- 1+O(z),$$ and so we obtain a contradiction.  Thus $F_0(z)$ and $F_0(z^k)$ are linearly independent over $\mathbb{C}(z)$.
\end{proof}

\begin{lemma}\label{Xnotzero} Let $F_0(z)$ be as defined above, let $r\in\mathbb{N}$, and let $h_0(z),\ldots,h_r(z)$ be rational functions such that $h_i(z)/z^{k^i-1}$ does not have a pole at $z=0$ for $i=0,\ldots ,r$.  Then, if $$\sum_{i=0}^r h_i(z) F_0(z^{k^i})= 0,$$ then $h_0(0)=0$.
\end{lemma}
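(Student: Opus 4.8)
The plan is to argue by contradiction and push the hypothesis down to a minimal relation. Suppose there is a relation $\sum_{i=0}^r h_i(z) F_0(z^{k^i}) = 0$ in which $h_0(0) \neq 0$, chosen so that $r$ is as small as possible among all counterexamples; by clearing denominators we may assume each $h_i(z)$ is a polynomial, and by dividing out a common power of $z$ we may assume not all $h_i(z)$ vanish at $z=0$ — but the hypothesis that $h_0(z)/z^{k^i-1}$ has no pole at $z=0$ forces $h_i(0)=0$ for all $i\geq 1$ unless we are careful, so instead I will keep track of valuations at $z=0$ directly. Write $v_i := \nu_0(h_i(z))$; the hypothesis says $v_i \geq k^i - 1$ for each $i$, and we are assuming $v_0 = 0$. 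The key structural input is the leading behaviour $F_0(z) = z^{-1} + 1 + O(z)$, so that $F_0(z^{k^i}) = z^{-k^i} + 1 + O(z)$ and thus $\nu_0\big(F_0(z^{k^i})\big) = -k^i$.

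The main step is to extract information by substituting $z=0$ after suitable normalisation. Since $\nu_0(h_i(z) F_0(z^{k^i})) = v_i - k^i \geq -1$ for every $i$, with equality $-1$ precisely when $v_i = k^i - 1$, the term $h_0(z)F_0(z)$ contributes a pole of order exactly $1$ at $z=0$ (as $h_0(0)\neq 0$), and every other term contributes order $\geq -1$. Multiplying the relation by $z$ and evaluating at $z=0$, only those $i$ with $v_i = k^i - 1$ survive, and we obtain a linear relation among the ``leading coefficients'' $\lim_{z\to 0} z\, h_i(z) F_0(z^{k^i})$ of the surviving terms; since $\lim_{z\to 0} z F_0(z^{k^i}) = \lim_{z\to 0} z\cdot z^{-k^i}(1 + O(z^{k^i}))$, which is $1$ if $i=0$ and $0$ if $i \geq 1$, all the $i\geq 1$ contributions die, leaving $h_0(0) = 0$, a contradiction. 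The only subtlety is making sure that after multiplying by $z$ the terms with $i\geq 1$ really do vanish at $z=0$: we have $z\,h_i(z)F_0(z^{k^i}) = z\,h_i(z)\big(z^{-k^i} + 1 + O(z)\big) = h_i(z)z^{1-k^i} + z\,h_i(z) + O(z^2 h_i(z))$, and by hypothesis $h_i(z)z^{1-k^i}$ has no pole at $z=0$ and in fact vanishes there since $\nu_0(h_i) \geq k^i - 1 \geq 1$ forces $\nu_0(h_i z^{1-k^i}) \geq 0$ with the borderline case $\nu_0(h_i) = k^i-1$ giving a nonzero constant — so this needs a touch more care.

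To handle that borderline case cleanly I would instead phrase the argument as follows: write $g_i(z) := h_i(z)/z^{k^i-1}$, which by hypothesis lies in $\mathbb{C}[[z]]$ (no pole at $0$), and rewrite $h_i(z)F_0(z^{k^i}) = z^{k^i-1}g_i(z)\cdot F_0(z^{k^i})$. Now $z^{k^i-1}F_0(z^{k^i}) = z^{k^i-1}\big(z^{-k^i} + 1 + O(z^{k^i})\big) = z^{-1} + z^{k^i-1} + O(z^{2k^i-1})$, so $z\cdot z^{k^i-1}F_0(z^{k^i}) = 1 + O(z^{k^i}) \in \mathbb{C}[[z]]$ with constant term $1$ for every $i\geq 0$. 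Multiplying the given relation by $z$ therefore yields $\sum_{i=0}^r g_i(z)\big(1 + O(z^{k^i})\big) = 0$ as an identity in $\mathbb{C}[[z]]$, and evaluating at $z=0$ gives $\sum_{i=0}^r g_i(0) = 0$. This is close but not yet the conclusion $h_0(0) = g_0(0) = 0$; to finish I would need a further idea — presumably a second relation obtained by applying a Cartier operator $\Lambda_j$ to the original relation (using Lemma \ref{ch6:Cartier}(a) to get $\Lambda_j(h_i(z^k)\cdots)$ type simplifications after first homogenising the $k$-powers) or by using Lemma \ref{independent} to rule out short relations, then inducting on $r$. The hard part is exactly this last move: a single evaluation at $z=0$ only gives the sum of the $g_i(0)$, so the real content is an inductive or Cartier-operator argument that isolates $g_0(0)$, and I expect that to be where the linear independence of $F_0(z)$ and $F_0(z^k)$ from Lemma \ref{independent} (bootstrapped to $F_0(z), F_0(z^k), F_0(z^{k^2})$ via the functional equation \eqref{F_1}) does the real work.
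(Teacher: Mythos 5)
Your proposal stops short of a proof: you correctly identify the skeleton (induct on $r$, with the base cases $r=0,1$ supplied by Lemma~\ref{independent}, and a Cartier operator to descend), but the inductive step is exactly the content of the lemma and you do not carry it out. Your evaluation-at-zero computation is fine as far as it goes, but it cannot suffice on its own: multiplying by $z$ and setting $z=0$ yields the single linear condition $\sum_{i=0}^r g_i(0)=0$ on the $r+1$ numbers $g_i(0)$, which is perfectly consistent with $g_0(0)\neq 0$, so no contradiction can come from that step alone. The missing move is the following. Normalise by dividing the relation by $h_0(z)$ (legitimate since $h_0(0)\neq 0$), so it reads $F_0(z)+\sum_{i=1}^m g_i(z)z^{k^i-1}F_0(z^{k^i})=0$ with each $g_i$ regular at $0$. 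Now apply $\Lambda_0$. Two computations make this work: first, applying $\Lambda_0$ to the functional equation \eqref{F_1} together with Lemma~\ref{ch6:Cartier}(a) gives $\Lambda_0(F_0)(z)=F_0(z)-z^{k-1}F_0(z^k)$; second, writing $z^{k^i-1}F_0(z^{k^i})=z^{k-1}\cdot\bigl(z^{k^{i-1}-1}F_0(z^{k^{i-1}})\bigr)\big|_{z\mapsto z^k}$ and using Lemma~\ref{ch6:Cartier}(a) again gives $\Lambda_0\bigl(g_i(z)z^{k^i-1}F_0(z^{k^i})\bigr)=\Lambda_0(g_i(z)z^{k-1})\,z^{k^{i-1}-1}F_0(z^{k^{i-1}})$. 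The result is a relation of length $m-1$ of exactly the same shape: the coefficient of $F_0(z)$ is $1+\Lambda_0(g_1(z)z^{k-1})$, which is nonzero at $z=0$ because $g_1(z)z^{k-1}$ has valuation at least $k-1\geqslant 1$ so $\Lambda_0$ of it vanishes at $0$; and the coefficient of $F_0(z^{k^{i}})$ for $i\geqslant 1$ is $z^{k^{i}-1}$ times a function regular at $0$. The induction hypothesis then gives the contradiction.

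In short: your instinct that a Cartier operator plus Lemma~\ref{independent} plus induction is the right combination is correct, but the verification that $\Lambda_0$ preserves the two structural hypotheses (leading coefficient a unit at $0$, higher coefficients divisible by $z^{k^{i}-1}$ in $\mathbb{C}[z]_{(z)}$) while strictly shortening the relation is the whole argument, and it is absent. Until that reduction is written down, the proof is not complete.
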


\begin{proof} We prove this by induction on $r$. For $r=0$ and $r=1$, the result follows by Lemma \ref{independent} since $F_0(z)$ and $F_0(z^k)$ are linearly independent over $\mathbb{C}(z)$.
So suppose that the result holds for $r<m$ with $m\geqslant 2$ and consider the case when $r=m$.

Towards a contradiction, suppose that $$\sum_{i=0}^m h_i(z) F_0(z^{k^i})  = 0$$ with $h_0(0)$ nonzero and $z^{k^i-1}$ dividing $h_i(z)$ in the local ring $\mathbb{C}[z]_{(z)}$ (recall that $\mathbb{C}[z]_{(z)}$ is the ring of all rational functions whose denominator, when written in reduced form, is nonzero at $z=0$).  For $i=1,\ldots,m$, set $$g_i(z) := z^{-k^i+1} \frac{h_i(z)}{h_0(z)}.$$ Then since $h_0(0)$ is nonzero, each $g_i(z)$ is regular at $z=0$ and
$$F_0(z) + \sum_{i=1}^m g_i(z) z^{k^i-1} F_0(z^{k^i}) = 0.$$
Applying the Cartier operator $\Lambda_0$ gives
$$\Lambda_0(F_0)(z) + \sum_{i=1}^m \Lambda_0( g_i(z)z^{k-1}) z^{k^{i-1}-1} F_0(z^{k^{i-1}}) = 0.$$ But using \eqref{F_1} and applying Lemma \ref{ch6:Cartier}(a), we have $\Lambda_0(F_0)(z) = F_0(z) - z^{k-1} F_0(z^k)$, so we have
\begin{multline}\label{lambdaF_1} 0 =  (1+\Lambda_0(g_1(z)z^{k-1})) F_0(z)\\ +
 (-1+\Lambda_0(g_2(z) z^{k-1})) z^{k-1} F_0(z^k)   \\ + \sum_{i=2}^{m-1} \Lambda_0(g_{i+1}(z)z^{k-1})  z^{k^{i}-1}F_0(z^{k^{i}}).  \end{multline}
Since $g_1(z)$ is regular at $z=0$, we have that $g_1(z)z^{k-1}$ has a power series expansion with zero constant term and hence $\Lambda_0(g_1(z)z^{k-1})$ vanishes at $z=0$, and so $1+\Lambda_0(g_1(z)z^{k-1})$ is a rational function which is nonzero at $z=0$. Since each of the higher-index coefficients in \eqref{lambdaF_1} are of the form $z^{k^{i}-1}$ times a rational function regular at $z=0$, the induction hypothesis applies and we get a contradiction. This contradiction proves the lemma.
\end{proof}

\begin{proof}[Proof of Theorem \ref{thm: example}] Let $F(z)$ be the $k$-regular power series defined in \eqref{eqF}.  We claim that there is no nonzero rational function $R(z)$ such that function $R(z)F(z)$ is a $k$-Becker power series.  Since $F(0)=1$, if $R(z)F(z)$ has a power series expansion at $z=0$, $R(z)$ must be regular at $z=0$.
Suppose towards a contradiction that there is a rational function $R(z)$ such that $R(z)$ is regular at $z=0$ and such that $F(z)R(z)$ is $k$-Becker.  Then we write $R(z)=z^a R_0(z)$ with $a\geqslant 0$ and with $R_0(0)$ nonzero.  Then there exist a natural number $d$ and polynomials $b_1(z),\ldots,b_d(z)$ such that
$$R_0(z)F(z) = b_1(z) z^{ka-a} R_0(z^k) F(z^k) + \cdots + b_d(z) z^{k^d a - a} R_0(z^{k^d}) F(z^{k^d}).$$ As defined above, $F(z)=z F_0(z)$, so we have
\begin{align}\label{star}R_0(z) F_0(z) &= b_1(z) z^{ka-a+ k-1} R_0(z^k)  F_0(z^k)\\ \nonumber &\qquad\qquad+ \cdots + b_d(z) z^{k^da -a + k^d-1} R_0(z^{k^d}) F_0(z^{k^d}). \end{align}
But this contradicts Lemma \ref{Xnotzero}.  The result follows.
\end{proof}

%%%%%%%%%%%%%%%%%%%%%%%%%%%%%
\section{A structure of Mahler functional equations for regular functions}\label{structure}
%%%%%%%%%%%%%%%%%%%%%%%%%%%%%

In this section, we prove Proposition \ref{prop:dumas}; that is, we show for $F(z)\in\mathbb{C}[[z]]$, the series $F(z)$ is $k$-regular if and only if $F(z)$ satisfies some functional equation \eqref{MFE} such that all of the zeros of $a_0(z)$ are either zero or roots of unity of order not coprime to $k$. As stated in the Introduction, Proposition \ref{prop:dumas} is obtained by combining Theorem \ref{main} with a result of Dumas \cite[Th\'eor\`eme 30]{Dumasthese}. Dumas's result \cite[Th\'eor\`eme 30]{Dumasthese} is proved by appealing to results for degree-one Mahler functions via his Structure Theorem recorded above as Theorem \ref{lem:DAB}. By appealing to Theorem \ref{lem:DAB} and the ring structure of the set of $k$-regular power series, one can show that a series $F(z)$ is $k$-regular, if one can show that the infinite product $$H(z):=\frac{1}{\prod_{j\geqslant 0}\Gamma(z^{k^j})}$$ is $k$-regular. This is exactly what Dumas did via the following lemma; see \cite[Lemme~8]{Dumasthese}.

\begin{lemma}[Dumas]\label{lem:partial} The infinite product $H(z)=\prod_{j\geqslant 0}\Gamma(z^{k^j})^{-1}$ is $k$-regular if and only if the $\mathbb{C}$-vector space $$\left\langle\left\{\Lambda_{r_n}\cdots \Lambda_{r_1}\left(\frac{1}{\prod_{j=0}^{n-1}\Gamma(z^{k^j})}\right): 0\leqslant r_i<k,\ n\in\mathbb{N}\right\}\right\rangle_\mathbb{C}$$ is finite-dimensional.
\end{lemma}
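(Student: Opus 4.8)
The plan is to reduce both directions of the equivalence to a single telescoping identity relating the iterated Cartier images of $H(z)$ to those of its partial products. Write $H_n(z):=\big(\prod_{j=0}^{n-1}\Gamma(z^{k^j})\big)^{-1}$ for the $n$-th partial product, with the convention $H_0(z)=1$. Since $\Gamma(0)=1$, each factor $\Gamma(z^{k^j})^{-1}$ lies in $\mathbb{C}[[z]]$ with constant term $1$, so $H_n(z)\in\mathbb{C}[[z]]$, the infinite product $H(z)$ converges in $\mathbb{C}[[z]]$, and $H(0)=1$; in particular $H(z)$ is a unit of $\mathbb{C}[[z]]$. Moreover one has the factorisation $H(z)=H_n(z)\,H(z^{k^n})$ for every $n\geqslant0$.

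First I would establish the identity: for every $n\geqslant0$ and every $(r_1,\ldots,r_n)\in\{0,\ldots,k-1\}^n$,
$$\Lambda_{r_n}\cdots\Lambda_{r_1}(H)(z)=H(z)\cdot\Lambda_{r_n}\cdots\Lambda_{r_1}(H_n)(z).$$
This follows from the factorisation $H(z)=H_n(z)\,H(z^{k^n})$ by peeling off one inner layer at a time. Indeed, reading $H(z^{k^n})$ as $\big(H(z^{k^{n-1}})\big)$ evaluated at $z^k$, Lemma \ref{ch6:Cartier}(a) with $G=H_n$ gives $\Lambda_{r_1}(H)(z)=H(z^{k^{n-1}})\,\Lambda_{r_1}(H_n)(z)$; an easy induction on the number of applied operators then yields $\Lambda_{r_j}\cdots\Lambda_{r_1}(H)(z)=H(z^{k^{n-j}})\,\Lambda_{r_j}\cdots\Lambda_{r_1}(H_n)(z)$ for $0\leqslant j\leqslant n$, and the case $j=n$ is the claimed identity.

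Then I would conclude as follows. Let $V$ be the $\mathbb{C}$-vector space attached to the power series $H(z)$ by Lemma \ref{cartvec}, and let $V'$ be the space in the statement, spanned by the elements $\Lambda_{r_n}\cdots\Lambda_{r_1}(H_n)(z)$. Under the index-preserving bijection between the two displayed generating families, the identity above says that each generator of $V$ equals $H(z)$ times the corresponding generator of $V'$; hence $V=H(z)\cdot V'$ as subspaces of $\mathbb{C}[[z]]$. Since $H(z)$ is a unit of $\mathbb{C}[[z]]$, multiplication by $H(z)$ is a $\mathbb{C}$-linear automorphism of $\mathbb{C}[[z]]$, so $\dim_{\mathbb{C}}V=\dim_{\mathbb{C}}V'$. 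By Lemma \ref{cartvec}, $H(z)$ is $k$-regular precisely when $\dim_{\mathbb{C}}V<\infty$, equivalently $\dim_{\mathbb{C}}V'<\infty$, which is the asserted equivalence. The computations are routine; the only points deserving care are the bookkeeping in the layer-peeling induction (in particular the reading of $H(z^{k^n})$ as a function of $z^k$ so that Lemma \ref{ch6:Cartier}(a) applies with $G=H_n$) and the remark that $H(z)$ is a unit of $\mathbb{C}[[z]]$, which is exactly what turns the two one-sided inclusions into the clean equality $V=H(z)V'$ and lets the dimension comparison handle both directions simultaneously.
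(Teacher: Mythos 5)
Your proposal is correct and follows essentially the same route as the paper: the paper also deduces the lemma from Lemma \ref{cartvec} together with the identity $\Lambda_{r_n}\cdots\Lambda_{r_1}(H)(z)=\bigl(\Lambda_{r_n}\cdots\Lambda_{r_1}(H_n)\bigr)(z)\cdot H(z)$, obtained by iterating the degree-one equation $\Gamma(z)H(z)=H(z^k)$ to get the factorisation $H(z)=H_n(z)H(z^{k^n})$ and then applying Lemma \ref{ch6:Cartier}(a). Your write-up merely makes explicit the layer-peeling induction and the observation that $H(0)=1$ makes multiplication by $H(z)$ an automorphism of $\mathbb{C}[[z]]$, both of which the paper leaves implicit.
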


Lemma \ref{lem:partial} follows from Lemma \ref{cartvec} combined with the equality $$\Lambda_{r_n}\cdots \Lambda_{r_1}H(z)=\left(\Lambda_{r_n}\cdots \Lambda_{r_1}\left(\frac{1}{\prod_{j=0}^{n-1}\Gamma(z^{k^j})}\right)\right) H(z),$$ which itself follows from the fact that $H(z)$ is a degree-one Mahler function satisfying the functional equation $$\Gamma(z)H(z)-H(z^k)=0.$$

We require the following proposition for the necessary direction of Proposition \ref{prop:dumas}. As stated previously, the argument is due to Dumas \cite[Th\'eor\`eme 30]{Dumasthese}. We state the result here in a slightly different form.

\begin{proposition}[Dumas]\label{regprod} Let $\Gamma(z)$ be a polynomial with $\Gamma(0)=1$. If all of the zeros of $\Gamma(z)$ are roots of unity of order not coprime to $k$, then $H(z)=\prod_{j\geqslant 0}\Gamma(z^{k^j})^{-1}$ is $k$-regular.
\end{proposition}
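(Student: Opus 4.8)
The plan is to reduce the statement to the finite-dimensionality criterion of Lemma~\ref{lem:partial} and then to establish that criterion by a careful analysis of how the Cartier operators act on the partial products $\prod_{j=0}^{n-1}\Gamma(z^{k^j})^{-1}$ when all zeros of $\Gamma(z)$ are roots of unity of order not coprime to $k$. First I would reduce to the case where $\Gamma(z)$ has a single zero $\zeta$, i.e. $\Gamma(z) = (1 - z/\zeta)^{m}$, since the general $H(z)$ is a finite product of such factors raised to integer powers, and the $k$-regular Laurent power series form a ring closed under the operation of inverting a power series with nonzero constant term; so $k$-regularity of each $\prod_{j\ge0}(1-z^{k^j}/\zeta)^{-m}$ would give $k$-regularity of the product. (One must be slightly careful that it is a genuine ring statement about $k$-regular series and not merely $k$-Mahler; this is standard and already used in the excerpt.)

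The heart of the matter is the single-factor case. Write $\zeta$ for a root of unity whose order $n_0$ is not coprime to $k$. The key structural fact is that the sequence $z \mapsto z^{k^j}$ interacts well with $\zeta$: because $\gcd(n_0,k)>1$, the orbit $\{\zeta^{k^j}\}_{j\ge0}$ has the property that, after finitely many steps, applying $z\mapsto z^k$ to the factor $(1-z/\zeta^{k^j})$ the exponent structure stabilises in a controlled way — more precisely, repeated substitution $z\mapsto z^k$ eventually maps $(1-z/\xi)$, for $\xi$ in the orbit, to a polynomial that splits off a factor which is again of the same form with a shifted index, up to a polynomial cofactor whose degree is bounded. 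This is exactly the phenomenon exploited in the proof of Lemma~\ref{lem:norou} via the identity $1 - z^{k^N}\overline\zeta^{k^N} = (1-z\overline\zeta^{k^N})(1 + z\overline\zeta^{k^N} + \cdots)$. The plan is to use that identity to show that $\prod_{j=0}^{n-1}(1-z^{k^j}/\zeta)^{-m}$ can be rewritten, modulo multiplication by a single fixed polynomial $h(z)$ independent of $n$, as $(\text{a fixed polynomial in }z)^{-1}$ times a partial product whose ``tail'' is absorbed; i.e. the denominators that appear are all divisors of one fixed polynomial, so that the rational functions $\prod_{j=0}^{n-1}\Gamma(z^{k^j})^{-1}$ all lie in a single finitely generated $\mathbb{C}[z]$-submodule of $\mathbb{C}((z))$ of the form $h(z)^{-1}\mathbb{C}[z]$ with bounded numerator degree.

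Given that boundedness, I would then check the Cartier-operator closure: applying any $\Lambda_r$ to an element of the form (numerator of bounded degree)/(fixed denominator $h(z)$) produces another such element, because $\Lambda_r$ does not increase the $z$-degree of a polynomial and, by the defining identity $\Lambda_r(F(z^k)G(z)) = F(z)\Lambda_r(G(z))$ together with Lemma~\ref{Kislemma8}, the denominator $h(z^k)$ that would appear can be controlled back to a divisor of a fixed polynomial (this is precisely the mechanism of Lemma~\ref{Buniform}). Hence the $\mathbb{C}$-vector space in Lemma~\ref{lem:partial} sits inside a finite-dimensional space $\{p(z)/h(z) : \deg p \le C\}$ for explicit $C$ and fixed $h$, so it is finite-dimensional, and Lemma~\ref{lem:partial} gives that $H(z)$ is $k$-regular. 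The main obstacle I expect is the bookkeeping in the single-factor case: one has to show uniformly in $n$ that the partial products, after clearing the orbit-related factors using the identity $1-z^{k^N}\overline\zeta^{k^N} = (1-z\overline\zeta^{k^N})(\cdots)$, really do have numerators of bounded degree over a fixed denominator rather than denominators that grow with $n$ — and handling the ``pre-periodic'' part of the orbit $\{\zeta^{k^j}\}$ (the finitely many indices before periodicity sets in) cleanly, separating those from the periodic tail where the telescoping is exact.
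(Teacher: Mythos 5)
Your overall strategy---reduce to the finite-dimensionality criterion of Lemma~\ref{lem:partial} and control poles via the orbit of the roots of $\Gamma$ under $z\mapsto z^k$---is the right one (it is Dumas's), but the load-bearing claim of your second paragraph is false. You assert that the partial products $\prod_{j=0}^{n-1}\Gamma(z^{k^j})^{-1}$ all lie in a fixed module $h(z)^{-1}\mathbb{C}[z]$ with bounded numerator degree. They cannot: $1/\prod_{j=0}^{n-1}\Gamma(z^{k^j})$ is already in lowest terms with denominator of degree $\deg(\Gamma)(1+k+\cdots+k^{n-1})$, unbounded in $n$, and no telescoping identity of the type $1-z^{k^N}\overline{\zeta}^{k^N}=(1-z\overline{\zeta}^{k^N})(\cdots)$ can shrink the degree of that denominator. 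What the hypothesis actually buys is a bound on the \emph{multiplicities}, not the number, of the poles: ``order of $\zeta$ not coprime to $k$'' is equivalent to $\zeta^{k^M}\neq\zeta$ for all $M\geqslant 1$, which forces each point $\alpha$ to satisfy $\alpha^{k^j}=\zeta$ for at most one index $j$ per root $\zeta$, so every pole of the partial product has order at most $\deg\Gamma$ even though the set of poles grows like $k^{n}$. (Compare $\zeta=1$, $k=2$: there the partial products acquire a pole at $z=1$ of order $n$, and indeed $T(z)^{-1}$ is not $2$-regular.)

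Consequently the order of operations in your argument must be reversed: it is not the partial products but their images under the full string of $n$ Cartier operators, $\Lambda_{r_n}\cdots\Lambda_{r_1}\bigl(\prod_{j=0}^{n-1}\Gamma(z^{k^j})^{-1}\bigr)$, that have finitely many poles of bounded order. This follows because $\Lambda_r$ sends a rational power series with a pole at $\alpha\neq 0$ of order $\mu$ to one whose only resulting pole is at $\alpha^k$, of order at most $\mu$ (partial fractions: $c(n)=\sum_i P_i(n)\alpha_i^{-n}$ gives $c(kn+r)=\sum_i P_i(kn+r)\alpha_i^{-r}(\alpha_i^{k})^{-n}$, and grouping by $\alpha_i^k$ takes a maximum, not a sum, of multiplicities); hence $n$ applications push a pole at $\alpha$ with $\alpha^{k^j}=\zeta$, $j\leqslant n-1$, to $\alpha^{k^n}=\zeta^{k^{n-j}}$, which lies in the fixed finite set $\{\zeta^{k^i}:i\geqslant 1,\ \Gamma(\zeta)=0\}$. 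Together with the easy degree bounds this yields the finite-dimensionality required by Lemma~\ref{lem:partial}. Separately, your parenthetical claim that the $k$-regular series are ``closed under the operation of inverting a power series with nonzero constant term'' is false---the paper's own example $T(z)^{-1}$ refutes it, and were it true the proposition would hold with no hypothesis on $\Gamma$ whatsoever; fortunately your reduction to a single root of $\Gamma$ only needs the ring (product) structure, so that step survives.
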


\noindent To prove Proposition \ref{regprod}, Dumas proved that the functions $$\Lambda_{r_n}\cdots \Lambda_{r_1}\left(\prod_{j=0}^{n-1}\Gamma(z^{k^j})^{-1}\right),$$ for $n\geqslant 1$, have only finitely many poles with bounded multiplicities and then applied Lemma~\ref{lem:partial}; see also \cite[Theorem 10]{D1993}. Compare with Lemma \ref{Buniform}, where we show a similar result for the set of matrices $\{{\bf B}_n(z):n\geqslant 1\}$.

For the sufficient direction of Proposition \ref{prop:dumas}, we will use the following result.

\begin{lemma}\label{Qzk} Let $k\geqslant 2$ be an integer, $Q(z)$ be a polynomial and suppose that all of the zeros of $Q(z)$ are either zero or roots of unity of order not coprime to $k$. Then for any integer $m\geqslant 1$, the zeros of $Q(z^{k^m})$ are either zero or roots of unity of order not coprime to $k$.
\end{lemma}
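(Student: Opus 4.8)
The plan is to argue about the zeros of $Q(z^{k^m})$ directly. If $\beta$ is a zero of $Q(z^{k^m})$, then $\alpha:=\beta^{k^m}$ is a zero of $Q(z)$, so by hypothesis either $\alpha=0$ or $\alpha$ is a root of unity of order not coprime to $k$. In the first case $\beta=0$ and we are done, so assume $\alpha$ is a root of unity, say of order $n$ with $\gcd(n,k)>1$. Then $\beta^{k^m n}=\alpha^n=1$, so $\beta$ is a root of unity; write its order as $N$, so that $N \mid k^m n$. The task reduces to showing $\gcd(N,k)>1$.

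First I would treat this as a purely number-theoretic statement about the multiplicative orders. Since $\beta$ has order $N$ and $\beta^{k^m}$ has order $n$, the standard formula for the order of a power of a root of unity gives $n = N/\gcd(N,k^m)$. Let $p$ be a prime dividing both $n$ and $k$ (which exists since $\gcd(n,k)>1$). From $n = N/\gcd(N,k^m)$ we get that $p \mid N$: indeed $p \mid n$ and $n$ divides $N$, so $p \mid N$. Since also $p \mid k$, this shows $\gcd(N,k) \geqslant p > 1$, i.e. $N$ is not coprime to $k$, which is exactly what we want.

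The one point that needs a little care, and which I expect to be the only real content, is the claim that $n$ divides $N$ (used above to conclude $p\mid N$ from $p\mid n$). This is immediate from $n = N/\gcd(N,k^m)$, since $\gcd(N,k^m)$ is a positive integer dividing $N$. Alternatively, and perhaps more transparently, one can avoid the order formula entirely: $\beta$ is a primitive $N$-th root of unity, so $\beta$ generates the cyclic group $\mu_N$ of $N$-th roots of unity; since $\alpha=\beta^{k^m}\in\mu_N$ has order $n$, we have $n \mid N$ by Lagrange's theorem, and then any prime $p\mid n$ satisfies $p\mid N$. Combined with $p\mid k$ this gives $\gcd(N,k)>1$.

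Putting the cases together: for an arbitrary zero $\beta$ of $Q(z^{k^m})$, either $\beta=0$, or $\beta$ is a root of unity whose order is not coprime to $k$; this is precisely the assertion of the lemma, and the proof is complete. I do not anticipate any obstacle beyond the elementary order-of-a-power computation just described; there is no need to invoke any of the Mahler-function machinery from the earlier sections.
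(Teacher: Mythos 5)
Your proof is correct. The only substantive point --- that the order $n$ of $\alpha=\beta^{k^m}$ divides the order $N$ of $\beta$, so that any prime dividing $\gcd(n,k)$ also divides $\gcd(N,k)$ --- is handled cleanly, either via the formula $n=N/\gcd(N,k^m)$ or via Lagrange's theorem in $\mu_N$; both justifications are valid. The paper proves the same statement by the contrapositive, using a different elementary mechanism: it assumes a zero $\zeta$ of $Q(z^{k^m})$ has order $\ell$ coprime to $k$, invokes the fixed-point characterisation $k^M\equiv 1\pmod{\ell}\Leftrightarrow\zeta^{k^M}=\zeta$, observes that this fixed-point property is inherited by $\xi=\zeta^{k^m}$ since $\xi^{k^M}=(\zeta^{k^M})^{k^m}=\xi$, and concludes that the order of $\xi$ is coprime to $k$, contradicting the hypothesis on $Q$. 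The paper's choice is natural in context because the condition $\zeta^{k^M}=\zeta$ is the form in which "order not coprime to $k$" appears throughout the paper (in the definition of the set $A$, in Corollary~\ref{zetaN=zeta}, etc.), whereas your argument is a self-contained divisibility computation that is arguably more transparent in isolation. Either proof is acceptable.
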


\begin{proof} Since all zeros of $Q(z)$ are either zero or roots of unity, it is clear that all zeros of $Q(z^{k^m})$ are either zero or roots of unity.

Now suppose to the contrary that there is a zero $z=\zeta$ of $Q(z^{k^m})$ that is a root of unity of order coprime to $k$, say $\ell$. Then since $\gcd(k,\ell)=1$, there is a positive integer $M$ dividing $\varphi(\ell)$ such that $k^M\equiv 1\ (\bmod\ \ell)$. Thus for this $M$, we have \begin{equation}\label{zz}\zeta^{k^M}=\zeta.\end{equation} Since $z=\zeta$ is a zero of $Q(z^{k^m})$, we have that $z=\xi:=\zeta^{k^m}$ is a zero of $Q(z)$. But then, using \eqref{zz}, we have $z=\xi$ is a zero of $Q(z)$ such that $$\xi=\zeta^{k^{m}}=\left(\zeta^{k^{M}}\right){}^{k^m}=\left(\zeta^{k^{m}}\right){}^{k^M}={\xi}^{k^M}.$$ If we denote by $n$ the order of $\xi$, this gives that $k^M\equiv 1\ (\bmod\ n)$, so that we have $\gcd(k,n)=1$, a contradiction, which proves the lemma.
\end{proof}

\begin{proof}[Proof of Proposition \ref{prop:dumas}] We prove sufficiency first. Towards this, suppose that $F(z)$ is $k$-regular and satisfies the minimal functional equation \eqref{MFE}. Following the comments after Theorem \ref{main}, we denote by $A$ the set of roots of unity $\zeta$ such that $\zeta^{k^M}\neq\zeta$ for all $M\geqslant 1$ and $a_0(\zeta)=0$; note that this condition is equivalent to the condition that the order of $\zeta$ is not coprime to $k$. Then there is a nonnegative integer $\gamma$ and an $N$ depending on $a_0(z)$ such that for $$Q(z):=\prod_{\zeta\in A}\prod_{j=0}^{N-1}(1-z^{k^j}\overline{\zeta}^{k^N})^{\nu_\zeta(a_0)},$$ the function $F(z)/z^\gamma Q(z)$ satisfies a Mahler-type functional equation \eqref{MFE} with $a_0(z)=1$. In particular, we write $$\frac{F(z)}{z^\gamma Q(z)}+\sum_{i=1}^D b_i(z)\cdot \frac{F(z^{k^i})}{z^{\gamma k^i} Q(z^{k^i})}=0.$$ Now multiplying by $z^{\gamma k^D} Q(z) Q(z^k)\cdots Q(z^{k^d})$ gives \begin{multline}\label{newMFE} \qquad z^{\gamma (k^{D}-1)}Q(z^k)\cdots Q(z^{k^D}) F(z)\\ +\sum_{i=1}^D b_i(z)z^{\gamma (k^D-k^i)}\left(\frac{\prod_{j=0}^D Q(z^{k^j})}{Q(z^{k^i})}\right) \cdot F(z^{k^i})=0.\qquad \end{multline} By the definition of $Q(z)$ and Lemma \ref{Qzk}, we have that $F(z)$ satisfies a (new) functional equation \eqref{MFE}, specifically Equation \eqref{newMFE}, such that all of the zeros of $$a_0(z)=z^{\gamma (k^{D}-1)}Q(z^k)\cdots Q(z^{k^D})$$ are either zero or roots of unity of order not coprime to $k$. This proves necessity.

For sufficiency, we use both Theorem \ref{lem:DAB} and Proposition \ref{regprod}. To this end, suppose that $F(z)$ satisfies some functional equation \eqref{MFE} such that all of the zeros of $a_0(z)$ are either zero or roots of unity of order not coprime to $k$. Now write $$a_0(z)=\rho z^\delta \Gamma(z),$$ where $\Gamma(0)=1$. Thus all the zeros of $\Gamma(z)$ are roots of unity of order not coprime to $k$. Now, Theorem \ref{lem:DAB}, gives that there is a $k$-regular series $G(z)$ such that $$F(z)=\frac{G(z)}{\prod_{j\geqslant 0}\Gamma(z^{k^j})}.$$ Applying Proposition \ref{regprod} gives that the function $$H(z):= \frac{1}{\prod_{j\geqslant 0}\Gamma(z^{k^j})}$$ is $k$-regular. Since $k$-regular series form a ring, we have that $F(z)=G(z)H(z)$ is $k$-regular. This proves sufficiency, and completes the proof of the proposition.
\end{proof}

%%%%%%%%%%%%%%%%%%%%%%%%%%%%%%%%%%%%%%%%%%%%%%%%
\bibliographystyle{amsplain}
\providecommand{\bysame}{\leavevmode\hbox to3em{\hrulefill}\thinspace}
\providecommand{\MR}{\relax\ifhmode\unskip\space\fi MR }
% \MRhref is called by the amsart/book/proc definition of \MR.
\providecommand{\MRhref}[2]{%
  \href{http://www.ams.org/mathscinet-getitem?mr=#1}{#2}
}
\providecommand{\href}[2]{#2}

%%%%%%%%%%%%%%%%%%%%%%%%%%%%%%%%%%%%%%%%%%%%%%%%

\end{document}